\documentclass[12pt,a4paper,reqno]{amsart}
\usepackage[T1]{fontenc}
\usepackage{lmodern}
\usepackage{amsmath,amssymb,mathtools,mathrsfs}
\usepackage{microtype}
\usepackage[a4paper,hmargin=28mm,top=26mm,bottom=28mm,includeheadfoot]{geometry}
\usepackage{tikz-cd}
\usepackage{booktabs}
\usepackage{xurl}
\usepackage{xcolor}
\definecolor{linkblue}{RGB}{35,64,105}
\usepackage[colorlinks=true,linkcolor=linkblue,citecolor=linkblue,urlcolor=linkblue]{hyperref}

\allowdisplaybreaks[1]

\newtheoremstyle{readableplain}
  {9pt plus 2pt minus 1pt}{8pt plus 2pt minus 1pt}
  {\itshape}{}{\bfseries}{.}{.5em}{}
\newtheoremstyle{readabledefinition}
  {9pt plus 2pt minus 1pt}{8pt plus 2pt minus 1pt}
  {\normalfont}{}{\bfseries}{.}{.5em}{}
\newtheoremstyle{readableremark}
  {9pt plus 2pt minus 1pt}{8pt plus 2pt minus 1pt}
  {\normalfont}{}{\itshape}{.}{.5em}{}
\theoremstyle{readableplain}
\newtheorem{theorem}{Theorem}[section]
\newtheorem{proposition}[theorem]{Proposition}
\newtheorem{lemma}[theorem]{Lemma}
\newtheorem{corollary}[theorem]{Corollary}

\newtheorem{conjecture}{Conjecture}[section]
\theoremstyle{readabledefinition}

\theoremstyle{readableremark}
\newtheorem{remark}[theorem]{Remark}

\makeatletter
\renewcommand{\section}{\@startsection{section}{1}{\z@}
  {-2.5ex\@plus-.8ex\@minus-.2ex}{1.2ex\@plus.2ex}
  {\normalfont\large\bfseries\raggedright}}
\renewcommand{\subsection}{\@startsection{subsection}{2}{\z@}
  {-2ex\@plus-.6ex\@minus-.2ex}{.8ex\@plus.2ex}
  {\normalfont\bfseries\raggedright}}
\renewcommand{\subsubsection}{\@startsection{subsubsection}{3}{\z@}
  {-1.5ex\@plus-.4ex\@minus-.2ex}{.6ex\@plus.1ex}
  {\normalfont\bfseries\raggedright}}
\makeatother

\numberwithin{equation}{section}

\DeclareMathOperator{\tr}{tr}

\newcommand{\PP}{\mathbb P}

\newcommand{\dbar}{\bar\partial}
\newcommand{\dd}{\mathrm d}

\DeclareMathOperator{\Ric}{Ric}

\newcommand{\RR}{\mathscr R}
\newcommand{\dV}{\mathrm dV}
\newcommand{\ii}{\sqrt{-1}}

\title[Positivity of the canonical bundle]{Positivity of the Canonical Bundle and Hermitian Metrics with Quasi-Negative Holomorphic Sectional Curvature}
\author{Xueyuan Wan}
\address{Xueyuan Wan: Mathematical Science Research Center, Chongqing University of Technology,
Chongqing 400054, China}
\email{xwan@cqut.edu.cn}
\makeatletter
\@namedef{subjclassname@2020}{\textup{2020} Mathematics Subject Classification}
\makeatother
\subjclass[2020]{53C55, 32Q05, 32Q25, 14C20}
\keywords{Canonical bundle, Hermitian metric, quasi-negative holomorphic sectional curvature,
K\"ahler manifold, Chern--Lu identity}
\date{}
\hypersetup{
  pdftitle={Positivity of the Canonical Bundle and Hermitian Metrics with Quasi-Negative Holomorphic Sectional Curvature},
  pdfauthor={Xueyuan Wan},
  pdfsubject={Canonical positivity on compact Kahler manifolds},
  pdfkeywords={Canonical bundle, Hermitian metric, quasi-negative holomorphic sectional curvature, Chern--Lu identity}
}

\begin{document}
\begin{abstract}
We prove that if a compact K\"ahler manifold admits a Hermitian metric
with quasi-negative holomorphic sectional curvature for the Chern
connection, then its canonical bundle is ample. The Hermitian metric
realizing this curvature need not be K\"ahler. This proves the K\"ahler case of
the quasi-negative conjecture of Yang and Zheng.
\end{abstract}
\maketitle

\section{Introduction}\label{sec:introduction}

The canonical bundle links the differential geometry of a complex
manifold to its algebraic geometry. Its positivity governs
pluricanonical embeddings and is central to the classification of
complex manifolds. Negative curvature is a natural analytic source of
this positivity: a K\"ahler metric with negative Ricci curvature
induces a positive Hermitian metric on the canonical bundle, while the
Aubin--Yau theorem produces a K\"ahler--Einstein metric with negative
Ricci curvature whenever the canonical bundle is ample
\cite{Aubin78,YauMA78}. A basic question is whether negativity of a
weaker curvature quantity such as holomorphic sectional curvature
already forces ampleness.

For K\"ahler metrics, the answer is affirmative. Heier, Lu, and Wong
\cite{HLW10} proved ampleness for projective threefolds and obtained
higher-dimensional results. Wu and Yau \cite{WY16} proved the theorem
for projective manifolds in every dimension. Tosatti and Yang
\cite{TY17} removed the projectivity assumption: a compact K\"ahler
manifold with a K\"ahler metric of negative holomorphic sectional
curvature has ample canonical bundle. They also proved nefness under
nonpositive holomorphic sectional curvature. Diverio and Trapani
\cite{DT19} treated quasi-negative holomorphic sectional curvature in
the K\"ahler-metric case,
and Wu and Yau \cite{WY16b} gave a unified proof of these results. An
alternative approach via the K\"ahler--Ricci flow was developed by
Nomura \cite{Nomura18}.

The statements above involve two distinct K\"ahler conditions. One
concerns the complex manifold itself; the other concerns the metric
whose curvature is prescribed. A manifold may admit a K\"ahler metric
even when the Hermitian metric under consideration is not K\"ahler.
This distinction matters here because the Chern curvature of a general
Hermitian metric lacks the symmetries used in the K\"ahler Schwarz
lemma. 

The Hermitian question belongs to a broader program relating negative
curvature, hyperbolicity, and positivity of the canonical bundle; see
\cite{Kobayashi98}. Yang and Zheng formulated the following
quasi-negative conjecture~\cite[Conjecture~1.1(b)]{YZ19}.

\begin{conjecture}[{\cite[Conjecture~1.1(b)]{YZ19}}]\label{conj:YZ}
If a compact complex manifold admits a Hermitian metric with
quasi-negative holomorphic sectional curvature, then its canonical
bundle is ample.
\end{conjecture}

The original conjectures do not assume that the manifold is
K\"ahler. Here holomorphic sectional curvature is computed using the
Chern connection. In the conjecture, quasi-negative means that
$H_h\leq0$ everywhere and is strictly negative in every direction at
some point. We prove this statement when the underlying complex
manifold is K\"ahler; no K\"ahler condition is imposed on the
Hermitian metric $h$.

Several results indicate both the scope and the difficulty of this
extension. Yang and Zheng introduced real bisectional curvature and
proved nefness and ampleness of the canonical bundle under appropriate
nonpositive and quasi-negative real bisectional curvature assumptions
on a compact complex manifold that admits a K\"ahler metric
\cite[Theorems~1.7 and~1.8]{YZ19}.
Broder and Stanfield \cite{BS26} proved that the canonical bundle is
ample on a compact K\"ahler manifold carrying a pluriclosed Hermitian
metric with negative holomorphic sectional curvature. In a different
direction, Lee and Streets \cite{LS21} studied the negative Chern
curvature-operator condition. In complex dimension two, they showed that this
condition forces the canonical bundle to be ample, without assuming
that the underlying surface is K\"ahler.

More recently, Tang \cite[Theorem~1.1(1)]{Tang26} proved that a
compact K\"ahler manifold carrying an arbitrary Hermitian metric with
nonpositive holomorphic sectional curvature has nef canonical bundle.
Tang also proved ampleness under strictly negative holomorphic sectional
curvature in complex dimension two and, under a balancedness assumption
on the Hermitian metric, in complex dimension three. In their recent preprint, Qin and
Tian \cite[Lemmas~3.2 and~3.3]{QT26} established an integrated
Chern--Lu identity and used it to prove ampleness of the canonical
bundle in the constant negative Chern holomorphic sectional curvature
case. 

Our main result is the following.

\begin{theorem}\label{thm:main}
Let $M$ be a compact K\"ahler manifold. If $M$ admits a
Hermitian metric with quasi-negative Chern holomorphic sectional
curvature, then the canonical bundle $K_M$ is ample. In particular,
$M$ is projective.
\end{theorem}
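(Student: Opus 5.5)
The argument splits into two steps. First we show that $K_M$ is nef and big; then we upgrade this to ampleness by excluding curves on which $K_M$ has degree zero. Let $h$ be the Hermitian metric with quasi-negative Chern holomorphic sectional curvature $H_h$, and fix a K\"ahler metric $\omega_0$ on $M$. Nefness follows from Tang's theorem \cite[Theorem~1.1(1)]{Tang26}, since $H_h\le 0$. For bigness and ampleness we follow the Wu--Yau/Tosatti--Yang/Diverio--Trapani strategy. For $\varepsilon>0$, nefness of $K_M$ lets us solve the twisted Monge--Amp\`ere equation
\[
\Ric(\omega_\varepsilon)=-\omega_\varepsilon+\varepsilon\omega_0,\qquad \omega_\varepsilon=-\Ric(\omega_0)+\varepsilon\omega_0+\ii\partial\dbar\varphi_\varepsilon>0,
\]
by Aubin--Yau with Tosatti--Yang's modification. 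The key is a uniform estimate $\tr_{\omega_\varepsilon}\omega_h\le C$, or an integrated substitute, where $\omega_h$ is the fundamental form of $h$.

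\textbf{Step 1: Chern--Lu inequality for the identity map.} We apply the Chern--Lu computation to $\mathrm{id}\colon(M,\omega_\varepsilon)\to(M,h)$. The source $\omega_\varepsilon$ is K\"ahler, so its Ricci term contributes $-\tr_{\omega_\varepsilon}\omega_h+\varepsilon(\cdots)$. The target $h$ is only Hermitian, so its Chern curvature lacks K\"ahler symmetries, and the curvature term $R^h_{i\bar j k\bar l}g^{i\bar q}g^{k\bar p}h_{\cdot}$ involves the full tensor rather than $H_h$ alone.

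\textbf{Step 2: controlling the curvature term.} At a point, diagonalize $\omega_\varepsilon$ with respect to $\omega_h$. We then need
\[
\sum_{i,k}\lambda_i\lambda_k R^h_{i\bar i k\bar k}\le \tfrac{\kappa}{2}\Bigl(\sum\lambda_i\Bigr)^2
\]
up to torsion terms, where $\kappa=\max_{|v|=1}H_h(v)\le0$. In the K\"ahler case this is the Royden lemma, whose proof averages $H$ over the torus orbit of a vector and uses the symmetries $R_{i\bar jk\bar l}=R_{k\bar j i\bar l}$. For Chern curvature, torus averaging of $H_h(\sum e^{i\theta_k}\sqrt{\lambda_k}e_k)$ gives only the symmetrized sum $\sum\lambda_i\lambda_k(R_{i\bar ik\bar k}+R_{i\bar kk\bar i})$. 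The defect is a torsion/first-Bianchi term $T*\bar T$ and $\nabla T$.

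I would absorb this defect, following Qin--Tian's integrated Chern--Lu identity \cite[Lemmas~3.2, 3.3]{QT26}. The plan is to integrate the inequality for $\log\tr_{\omega_\varepsilon}\omega_h$ against $\omega_\varepsilon^n$. The $\nabla T$ terms then integrate by parts onto the K\"ahler form $\omega_\varepsilon$, where $d\omega_\varepsilon=0$ kills them. The quadratic torsion terms should combine with the gradient term $|\partial\tr|^2/\tr$ of the Chern--Lu formula, which is nonnegative, to give a good sign. This is the main obstacle: obtaining an inequality involving only $H_h$, from which
\[
\int_M(-\kappa)(\tr_{\omega_\varepsilon}\omega_h)\,\omega_\varepsilon^n\le C\int_M\omega_\varepsilon^n
\]
follows, uniformly in $\varepsilon$.

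\textbf{Step 3: conclusion.} Since $-\kappa>0$ on an open set $U$, Step 2 gives a uniform lower bound on the volume $\int\omega_\varepsilon^n=(2\pi c_1(K_M)+\varepsilon[\omega_0])^n$. If $K_M^n=0$ the volume tends to $0$, and arguing as Diverio--Trapani (using $\tr_{\omega_\varepsilon}\omega_h\ge n(\omega_h^n/\omega_\varepsilon^n)^{1/n}$ and the Monge--Amp\`ere equation) yields a contradiction. Hence $K_M^n>0$ and $K_M$ is big, and $M$ is Moishezon and K\"ahler, hence projective. For ampleness we use Nakai--Moishezon, or equivalently show that the null locus of $K_M$ is empty (Collins--Tosatti). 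Suppose $V$ is a subvariety with $K_M^{\dim V}\cdot V=0$. Restricting $\omega_h$ to $V$ decreases holomorphic sectional curvature (Chern curvature decreasing for subbundles), so $V$ carries $H\le0$ and $H<0$ somewhere unless $V\cap U=\emptyset$. Repeating the volume argument on a resolution of $V$ excludes the first case. For the second, I would use the strict negativity on $U$ together with Tang's nefness argument run with small perturbations. Alternatively, by Wu--Yau \cite{WY16b} it suffices to show that any rational curve is excluded, and $H_h\le0$ forces rational curves to be $h$-flat, which contradicts compactness via Ahlfors--Schwarz applied through $U$. I regard this final ampleness step as a secondary obstacle.
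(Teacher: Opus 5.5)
\textbf{The gap is Step~2.} You flag it yourself as the main obstacle, and the mechanism you propose there does not work.

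Write $g=\omega_\varepsilon$ and $u=\tr_g\omega_h$. Let $A=\Gamma(h)-\Gamma(g)$, with symmetric part $S$ and torsion $T$. Let $D_g$ be the contraction of the second-derivative defect, so that $D_g\,\dV_g=2\ii\partial\dbar\omega_h\wedge\omega_g^{n-2}/(n-2)!$.

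\begin{itemize}
\item \emph{The $\nabla T$ term is not killed.} The defect $\frac14D_g$ integrates to zero only when unweighted. In $\Delta_g\log u$ it appears as $D_g/(4u)$. Integrating by parts then moves $\ii\partial\dbar$ onto $1/u$, not onto $\omega_\varepsilon$, so closedness of $\omega_\varepsilon$ kills nothing.
\item \emph{The gradient term cannot absorb the torsion.} One option is to spend all of $|A|^2_{g,h}/u$ on the gradient term via $|\partial u|^2\le u|A|^2_{g,h}$. Passing from $Q_g(h)$ to $\RR_g(h)$, the only contraction that averaging $H_h$ controls, then leaves an uncancelled $-|T|^2_{g,h}/(4u)$. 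The other option is to use $|T|^2_{g,h}/4$ to cancel that torsion. Then $|S|^2_{g,h}/u-|\partial u|^2/u^2$ has no sign, because $\partial u$ contains a contraction of $T$ that $S$ does not control.
\end{itemize}

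So the log-type estimate $\int(-\kappa)u\,\omega_\varepsilon^n\le C\int\omega_\varepsilon^n$ is not established, and your Diverio--Trapani step has nothing to run on. What survives is the unweighted identity $\Delta_g u=|S|^2_{g,h}+I_g(h)-\RR_g(h)+\frac14D_g$. In it the torsion cancels exactly and $\int_MD_g\,\dV_g=0$. With averaging, this gives only the quadratic bound $c_n\int_M(-\kappa)u^2\,\dV\le\int_Mu\,\dV=:J$.

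\textbf{This bound does not close with your twisted K\"ahler--Einstein equation.} Since $\kappa$ may vanish off $U$, you need a uniform fraction of the mixed measure $u\,\dV=\omega_h\wedge\omega_\varepsilon^{n-1}/(n-1)!$ to lie in $U$. The Monge--Amp\`ere equation controls only $\omega_\varepsilon^n$. The obvious substitute, the pointwise bound $u\ge n(\omega_h^n/\omega_\varepsilon^n)^{1/n}$, only yields $V'\gtrsim V^{1-2/n}$ for $V(\varepsilon)=\int_M\omega_\varepsilon^n/n!$, and that does not contradict $V(0)=0$.

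\textbf{The paper's missing idea.} It replaces your equation by the $(n-1)$-Hessian equation $\omega_h\wedge\omega_t^{n-1}/(n-1)!=e^{\varphi_t}\Omega$, with $[\omega_t]=\alpha+t\beta$.
\begin{itemize}
\item This makes $u_t\,\dV_t=e^{\varphi_t}\Omega$, so the Wu--Yau integrability lemma plus Jensen give $\mu_t(U)\ge\delta J(t)$.
\item The price is an extra $dd^c\log u_t$ in $\Ric(\omega_t)$. After integration by parts it is absorbed using the Cauchy--Schwarz bound $|\sigma|^2\le u|S|^2_{g,h}$, where $\sigma$ is the symmetrized gradient.
\item The Hessian equation only provides admissible solutions. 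A zero-direction lemma, a second use of $H_h\le0$, shows that $\omega_t$ is actually K\"ahler.
\end{itemize}
Cauchy--Schwarz on $U$ then gives $J\le CV$, hence $V'\le CV$ and $V(0)>0$.

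\textbf{Ampleness.} Your final step is fine in its second form, stated correctly: $H_h\le0$ alone excludes rational curves (Chern--Lu on $\PP^1$), and a projective manifold with big and nef $K_M$ and no rational curves has ample $K_M$. The null-locus route is unnecessary, and as sketched it is incomplete: the pulled-back metric degenerates on a resolution, and the case $V\cap U=\emptyset$ is left open.
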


The constant negative-curvature case is covered by
\cite[Lemma~3.3]{QT26}; the argument below treats the quasi-negative
case, where no global negative constant is available.
Theorem~\ref{thm:main} proves the K\"ahler case of the quasi-negative
Yang--Zheng conjecture. It assumes neither constant curvature nor a
pluriclosed or balanced condition on $h$. The metric $h$ itself need
not be K\"ahler; only the underlying manifold is assumed K\"ahler.

The key analytic idea is to use the symmetrized curvature contraction
rather than the contraction appearing directly in the Chern--Lu
formula. Let $g$ be an auxiliary K\"ahler metric and set
\[
 u=\tr_g h,
 \qquad
 q=\tr\bigl((g^{-1}h)^2\bigr).
\]
In a $g$-unitary frame, put
\[
 \RR_g(h)=\frac12\sum_{i,k}
 \bigl(R^h_{i\bar i k\bar k}+R^h_{i\bar k k\bar i}\bigr).
\]
If $H_h\leq0$ everywhere and $H_h(p,[\xi])<0$ for every complex line
$[\xi]\in\mathbb P(T^{1,0}_pM)$ at some point $p$, Berger's projective
averaging gives
\[
 \RR_g(h)\leq0,
\]
with strict inequality at $p$, and hence on a neighborhood of $p$ by
continuity. This averaging step uses only
the holomorphic sectional curvature of $h$; it does not require the
curvature tensor of $h$ to have the additional symmetries enjoyed by a
K\"ahler metric. 

For quasi-negative curvature, this identity is combined with an
$(n-1)$-Hessian auxiliary equation. The equation first produces an
admissible solution; the zero-direction argument shows that the
solution is in fact K\"ahler. The strict negativity on a fixed
neighborhood then gives a uniform mixed-volume estimate, which is the
main quantitative input for proving that $K_M$ is big.

Once $K_M$ is known to be nef, fix a K\"ahler form $\omega_0$ and write
\[
 \alpha=2\pi c_1(K_M),\qquad \beta=[\omega_0].
\]
Instead of a twisted K\"ahler--Einstein equation, we fix the positive
volume form $\Omega=\omega_0^n/n!$ and solve an $(n-1)$-Hessian
equation whose mixed volume form satisfies
\[
 \frac{\omega_h\wedge\omega_t^{n-1}}{(n-1)!}=e^{\varphi_t}\Omega.
\]
The resulting K\"ahler forms have class $\alpha+t\beta$. The
integrated Chern--Lu identity, together with the extra
$dd^c\log u_t$ term supplied by this equation, yields a uniform
estimate for the mixed volume on the open set of strict negativity.
The fixed K\"ahler form then turns this estimate into a differential
inequality for the cohomological volume polynomial
\[
 V(t)=\frac1{n!}\int_M(\alpha+t\beta)^n.
\]
This forces $V(0)>0$. The canonical bundle is therefore big. We then
use projectivity and the absence of rational curves to deduce
ampleness; the details appear below.

The paper is organized as follows. Section~\ref{sec:curvature} fixes
the curvature conventions and proves the projective averaging
inequality. Section~\ref{sec:integrated} establishes the integrated
Chern--Lu identity. Section~\ref{sec:canonical} constructs the
$(n-1)$-Hessian auxiliary metrics and proves Theorem~\ref{thm:main}.

\section{Curvature conventions and projective averaging}
\label{sec:curvature}

Throughout the paper, $M$ denotes a connected compact complex manifold
of complex dimension $n$. For a Hermitian metric $h$ on $T^{1,0}M$,
its fundamental form is
\[
 \omega_h=\ii h_{i\bar j}\,\dd z^i\wedge\dd\bar z^j.
\]
Repeated indices are summed unless a summation sign is displayed.
We use the Chern curvature convention
\begin{equation}\label{eq:chern-curvature}
 R^h_{i\bar j k\bar l}
 =-\partial_i\partial_{\bar j}h_{k\bar l}
  +h^{p\bar q}(\partial_i h_{k\bar q})
                   (\partial_{\bar j}h_{p\bar l}).
\end{equation}
Thus the holomorphic sectional curvature is defined by
\begin{equation}\label{eq:HSC}
 H_h(x,[\xi])
 =\frac{R^h(\xi,\bar\xi,\xi,\bar\xi)}{h(\xi,\bar\xi)^2},
 \qquad 0\neq\xi\in T^{1,0}_xM.
\end{equation}
We call $H_h$ quasi-negative if $H_h(x,[\xi])\leq0$ for every
$(x,[\xi])\in\mathbb P(T^{1,0}M)$ and there is a point $p\in M$ such
that $H_h(p,[\xi])<0$ for every $[\xi]\in\mathbb P(T^{1,0}_pM)$.
By compactness of the projective fiber over $p$ and continuity, this
implies that there is an open neighborhood $U$ of $p$ and a constant
$\eta>0$ such that $H_h(x,[\xi])\leq-\eta$ for every
$(x,[\xi])\in\mathbb P(T^{1,0}U)$.

For an auxiliary K\"ahler metric $g$, we set
\begin{equation}\label{eq:metric-conventions}
 \Delta_g f=g^{i\bar j}\partial_i\partial_{\bar j}f,
 \quad
 \dV_g=\frac{\omega_g^n}{n!},
 \quad
 \Ric(\omega_g)=-\ii\partial\dbar\log\det(g_{i\bar j}).
\end{equation}
We use the convention $dd^c=\ii\partial\dbar$.
In particular, $\Ric(\omega_g)$ represents $2\pi c_1(M)$,
and $\Delta_g f\leq0$ at a local maximum of a real function.

Set
\begin{equation}\label{eq:u-q}
 u=\tr_g h,
 \qquad q=\tr\bigl((g^{-1}h)^2\bigr).
\end{equation}
Here $g^{-1}h$ denotes the positive $g$-self-adjoint endomorphism
corresponding to $h$. If its eigenvalues are $\lambda_1,\ldots,\lambda_n$,
then
\begin{equation}\label{eq:eigenvalue-bound}
 u=\sum_i\lambda_i,\qquad
 q=\sum_i\lambda_i^2\geq\frac{u^2}{n}.
\end{equation}
Define the two curvature contractions
\begin{equation}\label{eq:Q-B}
 \begin{split}
 Q_g(h)&=g^{i\bar j}g^{k\bar l}R^h_{i\bar j k\bar l},
 \\[-2pt]
 B_g(h)&=g^{i\bar j}g^{k\bar l}R^h_{i\bar l k\bar j},
 \\[-2pt]
 \RR_g(h)&=\frac{Q_g(h)+B_g(h)}2.
 \end{split}
\end{equation}
Both are real-valued by the Hermitian symmetry of the Chern
curvature. For a general Hermitian metric, $Q_g(h)$ and $B_g(h)$
need not coincide.

The projective-space form of Berger's averaging trick \cite{Berger66} appears in
Yang--Zheng's proof of Proposition~1.3~\cite[Section~2]{YZ19}.
Proposition~1.3 itself concerns
the equivalence of the signs of holomorphic and real bisectional
curvature under a K\"ahler-like symmetry assumption; it is therefore
not the precise statement proved below.

\begin{lemma}\label{lem:averaging}
Fix a point of a complex manifold of dimension $n$, and let $g$
and $h$ be Hermitian metrics. If the Chern curvature tensor of $h$
at that point satisfies, for some $\kappa\geq0$,
\[
 R^h(\xi,\bar\xi,\xi,\bar\xi)
 \leq-\kappa\,h(\xi,\bar\xi)^2
 \quad\text{for every }\xi,
\]
then
\begin{equation}\label{eq:averaged-negativity}
 \RR_g(h)\leq-\frac{\kappa}{2}(u^2+q).
\end{equation}
In particular, $H_h\leq0$ implies $\RR_g(h)\leq0$, and strict
negativity in every direction at the point gives strict inequality
there.
\end{lemma}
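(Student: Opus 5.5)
We work at the fixed point in a $g$-unitary frame that also diagonalizes $h$, so $g_{i\bar j}=\delta_{ij}$ and $h_{i\bar j}=\lambda_i\delta_{ij}$. The plan is to integrate the quartic inequality over the $g$-unit sphere $S^{2n-1}\subset T^{1,0}_x M\cong\mathbb C^n$ with its normalized rotation-invariant measure $d\sigma$. The integration relies on the standard moment identities
\[
 \int_{S^{2n-1}}\xi^i\bar\xi^j\xi^k\bar\xi^l\,d\sigma
 =\frac{\delta_{ij}\delta_{kl}+\delta_{il}\delta_{kj}}{n(n+1)} .
\]

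On the left-hand side, with $R^h(\xi,\bar\xi,\xi,\bar\xi)=R^h_{i\bar j k\bar l}\xi^i\bar\xi^j\xi^k\bar\xi^l$, the identities give
\[
 \int R^h(\xi,\bar\xi,\xi,\bar\xi)\,d\sigma
 =\frac{1}{n(n+1)}\sum_{i,k}\bigl(R^h_{i\bar i k\bar k}+R^h_{i\bar k k\bar i}\bigr)
 =\frac{Q_g(h)+B_g(h)}{n(n+1)}
 =\frac{2\,\RR_g(h)}{n(n+1)} .
\]
No symmetry of $R^h$ beyond its definition as a $4$-tensor is used. This is the point of the symmetrized contraction: both index pairings $\delta_{ij}\delta_{kl}$ and $\delta_{il}\delta_{kj}$ appear automatically, and they produce exactly $Q_g$ and $B_g$.

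On the right-hand side, $h(\xi,\bar\xi)^2=\sum_{i,k}\lambda_i\lambda_k|\xi^i|^2|\xi^k|^2$. The same identities give
\[
 \int h(\xi,\bar\xi)^2\,d\sigma
 =\frac{1}{n(n+1)}\Bigl(\sum_i\lambda_i\sum_k\lambda_k+\sum_i\lambda_i^2\Bigr)
 =\frac{u^2+q}{n(n+1)} .
\]

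Integrating the pointwise hypothesis $R^h(\xi,\bar\xi,\xi,\bar\xi)\le-\kappa\,h(\xi,\bar\xi)^2$ over the sphere then yields
\[
 \frac{2\RR_g(h)}{n(n+1)}\le-\frac{\kappa(u^2+q)}{n(n+1)} ,
\]
which is \eqref{eq:averaged-negativity}.

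The two special cases follow directly. Taking $\kappa=0$ gives $\RR_g(h)\le0$ whenever $H_h\le0$. If $H_h<0$ in every direction at the point, compactness of $\mathbb P(T^{1,0}_x M)$ gives some $\kappa>0$ for which the hypothesis holds. Since $u^2+q>0$ because $h$ is positive definite, we get $\RR_g(h)<0$.

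The only step requiring care is the fourth-moment identity. It follows from $U(n)$-invariance: the integral is a $U(n)$-invariant tensor, hence a combination of the two pairings $\delta_{ij}\delta_{kl}$ and $\delta_{il}\delta_{kj}$. By the $i\leftrightarrow k$ symmetry the two coefficients are equal, and normalization fixes them by taking $i=j=k=l=1$, using $\int|\xi^1|^4\,d\sigma=2/(n(n+1))$. Alternatively, the identity can be obtained by computing Gaussian integrals over $\mathbb C^n$, since the quantities involved are homogeneous of degree $4$. One must also keep track of the index convention in $B_g$: with $g=\mathrm{id}$, $B_g(h)=\sum_{i,k}R^h_{i\bar k k\bar i}$, which matches the $\delta_{il}\delta_{kj}$ term.
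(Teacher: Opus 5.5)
Your proposal is correct and is essentially the paper's argument. The paper averages the degree-zero quotient $w^i\overline{w^j}w^k\overline{w^l}/|w|^4$ over $\PP^{n-1}$ with the normalized Fubini--Study form, which is equivalent to your average over the $g$-unit sphere, and applies the same fourth-moment identity to $R^h$ and to $h\otimes h$. Your extra steps, namely diagonalizing $h$, deriving the moment identity from $U(n)$-invariance, and spelling out the $\kappa=0$ and compactness arguments for the ``in particular'' clause, are fine additions rather than a different route.
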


\begin{proof}
Choose a $g$-unitary basis and write homogeneous coordinates on
$\PP^{n-1}$ as $[w^1:\cdots:w^n]$. Let $\omega_{\mathrm{FS}}$ be the
Fubini--Study form normalized by
$\int_{\PP^{n-1}}\omega_{\mathrm{FS}}^{n-1}=1$. In the form used by
Yang--Zheng~\cite[Section~2]{YZ19}, Berger's averaging trick
\cite{Berger66} gives
\begin{equation}\label{eq:projective-moment}
 \int_{\PP^{n-1}}
 \frac{w^i\overline{w^j}w^k\overline{w^l}}{|w|^4}
 \,\omega_{\mathrm{FS}}^{n-1}
 =\frac{\delta_{ij}\delta_{kl}+\delta_{il}\delta_{kj}}{n(n+1)}.
\end{equation}
The quotient is homogeneous of degree zero, so the integral is
well-defined on projective space. In a $g$-unitary frame, applying
\eqref{eq:projective-moment} to the curvature tensor gives
\begin{equation}\label{eq:average-curvature}
 \int_{\PP^{n-1}}
 \frac{R^h(w,\bar w,w,\bar w)}{|w|^4}\,\omega_{\mathrm{FS}}^{n-1}
 =\frac{Q_g(h)+B_g(h)}{n(n+1)}
 =\frac{2\RR_g(h)}{n(n+1)}.
\end{equation}
Applying the same identity to $h\otimes h$ gives
\begin{equation}\label{eq:average-metric}
 \int_{\PP^{n-1}}
 \frac{h(w,\bar w)^2}{|w|^4}\,\omega_{\mathrm{FS}}^{n-1}
 =\frac{u^2+q}{n(n+1)}.
\end{equation}
Integrating the assumed inequality and using
\eqref{eq:average-curvature} and \eqref{eq:average-metric} gives
\eqref{eq:averaged-negativity}, proving the lemma.
\end{proof}

\begin{remark}\label{rem:averaging-not-Royden}
The averaging is over the projective space of $g$-complex lines. Its
purpose is to control precisely the symmetrized contraction $\RR_g(h)$. The classical
Schwarz lemma and Royden's argument
\cite{YauSchwarz78,Royden80} motivate this calculation, but when $h$
is not K\"ahler one cannot simply replace $\RR_g(h)$ by $Q_g(h)$. The
next section accounts for the difference between these contractions.
\end{remark}

\section{An integrated Chern--Lu identity}
\label{sec:integrated}

In this section $g$ is K\"ahler and $h$ is an arbitrary Hermitian
metric on $M$. We assume $n\geq2$. The identity below is the
variable-curvature form of the calculation in
\cite[Lemma~3.2 and Appendix~A.1]{QT26}; we include a direct
derivation to isolate the terms caused by the failure of $h$ to be
K\"ahler.

\subsection{Connection difference and the exact term}

Let $\nabla^g$ and $\nabla^h$ be the Chern connections and put
\begin{equation}\label{eq:connection-difference}
 A^p{}_{ik}=\Gamma(h)^p{}_{ik}-\Gamma(g)^p{}_{ik},
 \qquad
 S^p{}_{ik}=\frac12\bigl(A^p{}_{ik}+A^p{}_{ki}\bigr).
\end{equation}
Here $\Gamma(h)^p{}_{ik}=h^{p\bar q}\partial_i h_{k\bar q}$ and
$\Gamma(g)^p{}_{ik}=g^{p\bar q}\partial_i g_{k\bar q}$ are the
Chern connection coefficients in local holomorphic coordinates.
Because $g$ is K\"ahler, the tensor
\begin{equation}\label{eq:torsion}
 T^p{}_{ik}=A^p{}_{ik}-A^p{}_{ki}
\end{equation}
is precisely the Chern torsion of $h$. We measure the two covariant
indices with $g$ and the output index with $h$. For example,
\begin{equation}\label{eq:mixed-norm}
 |A|_{g,h}^2
 =g^{i\bar j}g^{k\bar l}h_{p\bar q}
 A^p{}_{ik}\overline{A^q{}_{jl}}.
\end{equation}
The symmetric and skew-symmetric parts are orthogonal, so that
\begin{equation}\label{eq:norm-splitting}
 |A|_{g,h}^2=|S|_{g,h}^2+\frac14|T|_{g,h}^2.
\end{equation}

We package the second-derivative terms into the tensor
\begin{equation}\label{eq:D-tensor}
 \begin{split}
 D_{i\bar j k\bar l}={}&
 \partial_i\partial_{\bar j}h_{k\bar l}
 +\partial_k\partial_{\bar l}h_{i\bar j}-\partial_i\partial_{\bar l}h_{k\bar j}
 -\partial_k\partial_{\bar j}h_{i\bar l}.
 \end{split}
\end{equation}
Its tensoriality follows from the exterior-form identity (with
holomorphic differentials written before antiholomorphic ones):
\begin{equation}\label{eq:D-form}
 \ii\partial\dbar\omega_h
 =\sum_{\substack{i<k\\j<l}}
 D_{i\bar j k\bar l}\,
 \dd z^i\wedge\dd z^k\wedge\dd\bar z^j\wedge\dd\bar z^l.
\end{equation}
Write
\begin{equation}\label{eq:D-contraction}
 D_g=g^{i\bar j}g^{k\bar l}D_{i\bar j k\bar l}.
\end{equation}

\begin{lemma}[{\cite[Lemma~3.2, formulas~(3.5)--(3.6)]{QT26}}]\label{lem:stokes}
The contraction $D_g$ satisfies
\begin{equation}\label{eq:D-volume}
 D_g\,\dV_g
 =2\ii\partial\dbar\omega_h
       \wedge\frac{\omega_g^{n-2}}{(n-2)!}.
\end{equation}
Consequently,
\begin{equation}\label{eq:D-integral}
 \int_M D_g\,\dV_g=0.
\end{equation}
\end{lemma}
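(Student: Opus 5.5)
The plan is to verify \eqref{eq:D-volume} pointwise and then integrate. Both sides are tensorial: the right-hand side manifestly so, and the left-hand side because \eqref{eq:D-form} identifies $D$ with the coefficients of $\ii\partial\dbar\omega_h$. So it suffices to check the identity at a fixed point $x$ in holomorphic coordinates with $g_{i\bar j}(x)=\delta_{ij}$. The K\"ahler condition on $g$ plays no role in this pointwise step. There $\omega_g=\ii\sum_m \dd z^m\wedge\dd\bar z^m$, and
\[
 \frac{\omega_g^{n-2}}{(n-2)!}
 =\sum_{|I|=n-2}\prod_{m\in I}\bigl(\ii\,\dd z^m\wedge\dd\bar z^m\bigr).
\]
The $(2,2)$-form $\ii\partial\dbar\omega_h$ pairs nontrivially only with the terms where $I$ is the complement of a pair $\{i<k\}$. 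It picks out exactly the coefficients with $\{j,l\}=\{i,k\}$. Since the $\dd z$'s are written before the $\dd\bar z$'s in \eqref{eq:D-form}, the surviving coefficients are those with $j=i,l=k$.

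Concretely, I would compute
\[
 \dd z^i\wedge\dd z^k\wedge\dd\bar z^i\wedge\dd\bar z^k\wedge\prod_{m\ne i,k}\bigl(\ii\,\dd z^m\wedge\dd\bar z^m\bigr).
\]
Rearranging $\dd z^i\wedge\dd z^k\wedge\dd\bar z^i\wedge\dd\bar z^k=-(\dd z^i\wedge\dd\bar z^i)\wedge(\dd z^k\wedge\dd\bar z^k)=(\ii\dd z^i\wedge\dd\bar z^i)\wedge(\ii\dd z^k\wedge\dd\bar z^k)$, this equals $\omega_g^n/n!=\dV_g$. Hence
\[
 \ii\partial\dbar\omega_h\wedge\frac{\omega_g^{n-2}}{(n-2)!}=\sum_{i<k}D_{i\bar i k\bar k}\,\dV_g .
\]

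It remains to compare $\sum_{i<k}D_{i\bar ik\bar k}$ with $D_g=\sum_{i,k}D_{i\bar ik\bar k}$. From \eqref{eq:D-tensor}, $D$ is symmetric under $(i,\bar j)\leftrightarrow(k,\bar l)$, so $D_{i\bar ik\bar k}=D_{k\bar k i\bar i}$. Moreover $D_{i\bar i i\bar i}=0$, since the four terms cancel in pairs. Therefore $D_g=2\sum_{i<k}D_{i\bar ik\bar k}$, which is \eqref{eq:D-volume}.

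For \eqref{eq:D-integral}, I now use that $g$ is K\"ahler: $\dd\omega_g=0$, so $\omega_g^{n-2}$ is closed. Then $\ii\partial\dbar\omega_h\wedge\omega_g^{n-2}=\dd\bigl(\ii\dbar\omega_h\wedge\omega_g^{n-2}\bigr)$, using $\dd(\dbar\omega_h)=\partial\dbar\omega_h$ because $\dbar^2=0$. Stokes' theorem on the compact manifold $M$ gives zero.

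The only delicate point is sign and ordering bookkeeping in \eqref{eq:D-form}. One must confirm that $\ii\partial\dbar$ of $\ii h_{k\bar l}\dd z^k\wedge\dd\bar z^l$, after antisymmetrizing over $i<k$, $j<l$, yields precisely the four-term combination \eqref{eq:D-tensor} with the stated ordering, since the factor $2$ depends on it. I would take \eqref{eq:D-form} as given from the text and only double-check the rearrangement sign above. If needed, I would verify it independently on the test case $n=2$ with $h_{1\bar1}$ depending only on $z^2$.
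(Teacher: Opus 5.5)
Your proposal is correct and follows essentially the same route as the paper. It computes pointwise in coordinates with $g_{i\bar j}(x)=\delta_{ij}$, keeping only the coefficients with $\{j,l\}=\{i,k\}$, and then applies Stokes' theorem using $\dd\omega_g=0$. You are slightly more explicit than the paper: you check the wedge-reordering sign and note that $D_{i\bar i i\bar i}=0$, whereas the paper leaves both implicit when it writes $\sum_{i,k}D_{i\bar i k\bar k}=2\sum_{i<k}D_{i\bar i k\bar k}$.
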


\begin{proof}
At the point under consideration, choose coordinates in which
$g_{i\bar j}=\delta_{ij}$. In the wedge product on the right of
\eqref{eq:D-volume}, a coefficient in \eqref{eq:D-form} contributes
only when the unordered pairs $\{i,k\}$ and $\{j,l\}$ agree. For
each $i<k$ it contributes $D_{i\bar i k\bar k}\dV_g$.
On the other hand, the contraction on the left is
\[
 D_g=\sum_{i,k}D_{i\bar i k\bar k}
     =2\sum_{i<k}D_{i\bar i k\bar k}.
\]
This proves the factor $2$ in \eqref{eq:D-volume}. Since $g$ is
K\"ahler, $\partial\omega_g=\dbar\omega_g=0$, and Stokes' theorem
gives
\[
 \int_M\ii\partial\dbar\omega_h\wedge\omega_g^{n-2}
 =\int_M\ii\partial\dbar
       \bigl(\omega_h\wedge\omega_g^{n-2}\bigr)=0.
\]
No closedness assumption on $\omega_h$ is needed.
\end{proof}

\subsection{Curvature symmetrization and torsion cancellation}

\begin{lemma}[{cf.~\cite[(2.11)]{Tang26b}}]\label{lem:curvature-difference}
The Chern curvature of $h$ satisfies
\begin{equation}\label{eq:double-antisymmetry}
 \begin{split}
 &R^h_{i\bar j k\bar l}+R^h_{k\bar l i\bar j}
  -R^h_{i\bar l k\bar j}-R^h_{k\bar j i\bar l}=h_{p\bar q}T^p{}_{ik}
                      \overline{T^q{}_{jl}}
                  -D_{i\bar j k\bar l}.
 \end{split}
\end{equation}
In particular,
\begin{equation}\label{eq:Q-symmetrized}
 Q_g(h)=\RR_g(h)+\frac14|T|_{g,h}^2-\frac14D_g.
\end{equation}
\end{lemma}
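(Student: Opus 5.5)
Since \eqref{eq:double-antisymmetry} is a pointwise tensorial identity, we prove it by direct computation in local holomorphic coordinates and then contract. First expand the left-hand side using \eqref{eq:chern-curvature}. The four second-derivative terms are
\[
 -\partial_i\partial_{\bar j}h_{k\bar l}-\partial_k\partial_{\bar l}h_{i\bar j}
 +\partial_i\partial_{\bar l}h_{k\bar j}+\partial_k\partial_{\bar j}h_{i\bar l}
 =-D_{i\bar j k\bar l}
\]
by the definition \eqref{eq:D-tensor}. So it remains to show that the four quadratic first-order terms combine to $h_{p\bar q}T^p{}_{ik}\overline{T^q{}_{jl}}$.

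Write the quadratic term of $R^h_{i\bar jk\bar l}$ as $h_{p\bar q}\Gamma(h)^p{}_{ik}\overline{\Gamma(h)^q{}_{jl}}$. This uses $\Gamma(h)^p{}_{ik}=h^{p\bar s}\partial_ih_{k\bar s}$ and $\partial_{\bar j}h_{p\bar l}=\overline{\partial_j h_{l\bar p}}$, together with $h^{p\bar q}h_{p\bar a}\ldots$ bookkeeping. Check this once carefully.

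Set $\Gamma^p_{ik}=\Gamma(h)^p{}_{ik}$. The four quadratic terms are then
\[
 h_{p\bar q}\bigl(\Gamma^p_{ik}\overline{\Gamma^q_{jl}}+\Gamma^p_{ki}\overline{\Gamma^q_{lj}}
 -\Gamma^p_{ik}\overline{\Gamma^q_{lj}}-\Gamma^p_{ki}\overline{\Gamma^q_{jl}}\bigr)
 =h_{p\bar q}(\Gamma^p_{ik}-\Gamma^p_{ki})\overline{(\Gamma^q_{jl}-\Gamma^q_{lj})}.
\]
This is the Chern torsion of $h$ paired with itself. Since $g$ is K\"ahler, $\Gamma(g)$ is symmetric, so $\Gamma^p_{ik}-\Gamma^p_{ki}=A^p{}_{ik}-A^p{}_{ki}=T^p{}_{ik}$ by \eqref{eq:torsion}. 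This gives \eqref{eq:double-antisymmetry}.

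The main point to verify is the index placement: the $h^{p\bar q}$-contracted product must really factor as a sesquilinear pairing of $\Gamma(h)$ with its conjugate, with the correct index slots. Sign conventions, $-\partial\bar\partial h$ versus $+$, also feed directly into the sign of $D$. I would do this check by writing $\partial_{\bar j}h_{p\bar l}=h_{p\bar s}\overline{\Gamma^s_{jl}}$, which follows from conjugating $\partial_j h_{l\bar p}=\Gamma^s_{jl}h_{s\bar p}$.

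For \eqref{eq:Q-symmetrized}, contract \eqref{eq:double-antisymmetry} with $g^{i\bar j}g^{k\bar l}$. The contraction of $R^h_{k\bar li\bar j}$ equals $Q_g(h)$ by relabeling, and that of $R^h_{k\bar ji\bar l}$ equals $B_g(h)$. The torsion term contracts to $|T|^2_{g,h}$ as in \eqref{eq:mixed-norm}, and the $D$ term contracts to $D_g$. Hence
\[
 2Q_g(h)-2B_g(h)=|T|_{g,h}^2-D_g .
\]
Writing $Q_g=\RR_g+\tfrac12(Q_g-B_g)$ then yields $Q_g(h)=\RR_g(h)+\tfrac14|T|^2_{g,h}-\tfrac14D_g$.
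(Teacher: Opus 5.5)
Your proposal is correct and follows the paper's proof essentially step for step. Expanding via \eqref{eq:chern-curvature}, the second-derivative terms give $-D_{i\bar jk\bar l}$, and the quadratic terms factor as $h_{p\bar q}(\Gamma^p_{ik}-\Gamma^p_{ki})\overline{(\Gamma^q_{jl}-\Gamma^q_{lj})}=h_{p\bar q}T^p{}_{ik}\overline{T^q{}_{jl}}$ because $\Gamma(g)$ is symmetric; contracting with $g^{i\bar j}g^{k\bar l}$ then gives $2(Q_g-B_g)=|T|^2_{g,h}-D_g$. Your explicit check $\partial_{\bar j}h_{p\bar l}=h_{p\bar s}\overline{\Gamma^s_{jl}}$ is precisely the step that justifies this factorization, which the paper states without detail.
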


\begin{proof}
Substitute \eqref{eq:chern-curvature} into the left-hand side of
\eqref{eq:double-antisymmetry}. The second-derivative terms give
$-D_{i\bar j k\bar l}$. Write the quadratic term in the curvature as
\[
 h_{p\bar q}\Gamma(h)^p{}_{ik}
                   \overline{\Gamma(h)^q{}_{jl}}.
\]
The four quadratic terms combine to
\[
 h_{p\bar q}
 \bigl(\Gamma(h)^p{}_{ik}-\Gamma(h)^p{}_{ki}\bigr)
 \overline{\bigl(\Gamma(h)^q{}_{jl}-\Gamma(h)^q{}_{lj}\bigr)},
\]
which is the displayed torsion term.

Contracting \eqref{eq:double-antisymmetry} with
$g^{i\bar j}g^{k\bar l}$ gives
\[
 2\bigl(Q_g(h)-B_g(h)\bigr)=|T|_{g,h}^2-D_g.
\]
Together with the definition of $\RR_g(h)$, this is
\eqref{eq:Q-symmetrized}.
\end{proof}

To write the Ricci term, identify a real $(1,1)$-form with its
Hermitian coefficient matrix and set
\begin{equation}\label{eq:Ricci-contraction}
 I_g(h)=\tr\bigl(g^{-1}\Ric(\omega_g)g^{-1}h\bigr).
\end{equation}
Equivalently, in a $g$-unitary frame,
$I_g(h)=\sum_{i,k}\Ric(g)_{i\bar k}h_{k\bar i}$.

\begin{proposition}[{cf.~\cite[Lemma~3.2 and Appendix~A.1]{QT26}}]
\label{prop:identity}
For $u=\tr_g h$, one has the pointwise identity
\begin{equation}\label{eq:symmetrized-identity}
 \Delta_g u
 =|S|_{g,h}^2+I_g(h)-\RR_g(h)+\frac14D_g.
\end{equation}
Consequently,
\begin{equation}\label{eq:integrated-identity}
 \int_M\bigl(|S|_{g,h}^2+I_g(h)-\RR_g(h)\bigr)\,\dV_g=0.
\end{equation}
\end{proposition}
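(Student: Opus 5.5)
The plan is to compute $\Delta_g u$ directly in holomorphic normal coordinates for the K\"ahler metric $g$ at a point, express the result through the contraction $Q_g(h)$, and then convert $Q_g(h)$ into $\RR_g(h)$ using Lemma~\ref{lem:curvature-difference}. Choose coordinates with $g_{i\bar j}=\delta_{ij}$ and $\dd g_{i\bar j}=0$ at the point, so that $\Gamma(g)=0$ and $A=\Gamma(h)$ there. Since $\partial_k\partial_{\bar k}g_{i\bar j}=-R^g_{k\bar k i\bar j}$ at the point, and $g$ is K\"ahler, differentiating $u=g^{i\bar j}h_{i\bar j}$ twice gives
\[
 \Delta_g u=\sum_{i,k}\partial_k\partial_{\bar k}h_{i\bar i}
 +\sum_{i,j,k}R^g_{k\bar k i\bar j}h_{j\bar i}.
\]
The second sum is $\sum_{i,j}\Ric(g)_{i\bar j}h_{j\bar i}=I_g(h)$, using the K\"ahler symmetry of $R^g$.

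For the first sum, I will use the Chern curvature formula \eqref{eq:chern-curvature} with indices placed so that the second derivatives fall on $h_{i\bar i}$. Taking $R^h_{k\bar k i\bar i}$ gives
\[
 \sum\partial_k\partial_{\bar k}h_{i\bar i}
 =-\sum_{i,k}R^h_{k\bar k i\bar i}
 +\sum h^{p\bar q}\partial_k h_{i\bar q}\,\overline{\partial_k h_{i\bar p}}.
\]
In the normal frame the quadratic term equals $|A|_{g,h}^2$, because $\partial_kh_{i\bar q}=h_{p\bar q}A^p{}_{ki}$. The curvature sum is $Q_g(h)$, since $Q$ is symmetric under swapping the pairs. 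Hence
\[
 \Delta_g u=|A|_{g,h}^2+I_g(h)-Q_g(h).
\]
This is also tensorial, so it holds globally.

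Next I substitute \eqref{eq:Q-symmetrized}, namely $Q_g(h)=\RR_g(h)+\tfrac14|T|^2-\tfrac14D_g$, and the splitting \eqref{eq:norm-splitting}, namely $|A|^2=|S|^2+\tfrac14|T|^2$. The torsion terms cancel, leaving
\[
 |S|^2+I_g(h)-\RR_g(h)+\tfrac14D_g.
\]
This is \eqref{eq:symmetrized-identity}. For \eqref{eq:integrated-identity}, integrate against $\dV_g$. Then $\int_M\Delta_g u\,\dV_g=0$, because $\Delta_g u\,\dV_g=\ii\partial\dbar u\wedge\omega_g^{n-1}/(n-1)!$ is exact when $g$ is K\"ahler. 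Also $\int_M D_g\,\dV_g=0$ by Lemma~\ref{lem:stokes}.

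The main obstacle is bookkeeping in the second step. I need to check which index ordering of $R^h$ actually appears, $R^h_{k\bar k i\bar i}$ versus $R^h_{i\bar i k\bar k}$; the contracted sum is the same either way, but the quadratic term changes. I also need to confirm that the quadratic term is $|A|^2$ with the index order $A^p{}_{ki}$ matching \eqref{eq:mixed-norm}, rather than a transposed version that would differ by torsion. Since the norm sums over both covariant indices, both orderings give the same value, so the identity holds regardless.
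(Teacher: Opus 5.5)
Your proposal is correct and follows the paper's proof essentially step for step. Like the paper, you first derive the Chern--Lu identity $\Delta_g u=|A|_{g,h}^2+I_g(h)-Q_g(h)$ in $g$-normal coordinates, using no K\"ahler symmetry of $h$. You then substitute \eqref{eq:norm-splitting} and \eqref{eq:Q-symmetrized} so that the $\tfrac14|T|_{g,h}^2$ terms cancel, and finally integrate using exactness of $\Delta_g u\,\dV_g$ together with Lemma~\ref{lem:stokes}. The index-ordering concern you flag is vacuous: it is only a relabeling of summed indices, as you yourself conclude.
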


\begin{proof}
The Chern--Lu identity for the holomorphic identity map
$(M,g)\to(M,h)$ reads
\begin{equation}\label{eq:usual-Chern-Lu}
 \Delta_g u=|A|_{g,h}^2+I_g(h)-Q_g(h);
\end{equation}
see the Schwarz-lemma calculations in \cite{Lu68,YauSchwarz78}.
For completeness, choose $g$-normal holomorphic coordinates at a
point. Since $g$ is K\"ahler, $g_{i\bar j}=\delta_{ij}$ and
$\partial g_{i\bar j}=0$ at that point. Differentiating
$u=g^{k\bar l}h_{k\bar l}$ gives
\[
 \Delta_g u
 =\sum_{i,k,l}(\partial_i\partial_{\bar i}g^{k\bar l})
                     h_{k\bar l}
  +\sum_{i,k}\partial_i\partial_{\bar i}h_{k\bar k}.
\]
Using the K\"ahler curvature symmetries of $g$, the first term is
$I_g(h)$. Formula~\eqref{eq:chern-curvature} identifies the second
term with
\[
 -Q_g(h)+\sum_{i,k,p,q}h_{p\bar q}
           \Gamma(h)^p{}_{ik}\overline{\Gamma(h)^q{}_{ik}}
 =-Q_g(h)+|A|_{g,h}^2,
\]
since $\Gamma(g)=0$ at the chosen point. This proves
\eqref{eq:usual-Chern-Lu} directly, without using any K\"ahler
symmetry of $h$.

Substituting \eqref{eq:norm-splitting} and
\eqref{eq:Q-symmetrized} into \eqref{eq:usual-Chern-Lu} cancels
the two occurrences of $|T|_{g,h}^2/4$, and gives
\eqref{eq:symmetrized-identity}. Finally, since
$\int_M\Delta_g u\,\dV_g=0$, Lemma~\ref{lem:stokes} gives
\eqref{eq:integrated-identity}.
\end{proof}

\section{Positivity of the canonical bundle under quasi-negative curvature}
\label{sec:canonical}

We now prove Theorem~\ref{thm:main}, using the point $p$ and the
neighborhood $U$ provided by the quasi-negative hypothesis.
In complex dimension one,
\[
 \deg K_M=-\frac1{2\pi}\int_M H_h\,\omega_h>0,
\]
so the conclusion is immediate. We assume $n\geq2$ from now on.

\subsection{Nefness and localized strict negativity}

For a quasi-negative metric, define
\[
 \kappa(x)=-\max_{[\xi]\in\mathbb P(T^{1,0}_xM)}H_h(x,[\xi]).
\]
Then $\kappa\geq0$, and there is an open set $U\Subset M$ and a
constant $\kappa_0>0$ such that $\kappa\geq\kappa_0$ on $U$.
We first record the birational input. A line bundle is called nef
here if its first Chern class lies in the closure of the K\"ahler
cone.

\begin{lemma}[{\cite[Theorem~1.1(1)]{Tang26}}]\label{lem:nef}
Let $M$ be a compact K\"ahler manifold carrying a Hermitian metric
$h$ with $H_h\leq0$. Then $M$ contains no rational curves and
$K_M$ is nef.
\end{lemma}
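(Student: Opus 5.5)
The lemma is cited from Tang, but I would reprove it in two independent parts, using the tools of Sections~\ref{sec:curvature}--\ref{sec:integrated}.

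\emph{No rational curves.} Suppose $f\colon\PP^1\to M$ is a nonconstant holomorphic map. Pull back $h$ to the possibly degenerate Hermitian metric $f^*h$ on $T\PP^1$, which is smooth away from the finitely many critical points of $f$. By the Chern curvature decreasing property for holomorphic subbundles, the Gauss curvature of $f^*h$ at a regular point is at most $H_h(f(z),[f_*\partial_z])\le0$; this is the standard Griffiths computation, and the second fundamental form term has the favorable sign. At a critical point of order $m$, $f^*h$ vanishes like $|z|^{2m}$, so its curvature current is $K\,\omega_{f^*h}$ plus positive multiples of Dirac masses. Gauss--Bonnet then gives
\[
2=\deg T\PP^1=\frac1{2\pi}\int K\,\omega_{f^*h}-\sum m_j\le0,
\]
which is a contradiction.

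\emph{Nefness.} Fix a K\"ahler form $\omega_0$. For each $\varepsilon>0$, I would show that $2\pi c_1(K_M)+\varepsilon[\omega_0]$ is K\"ahler. The plan is to solve, by Aubin--Yau, the twisted equation $\Ric(\omega_\varepsilon)=-\omega_\varepsilon+\varepsilon\omega_0$ whenever the class is K\"ahler. The continuity method in $\varepsilon$, starting from large $\varepsilon$, then reduces the problem to a uniform $C^0$ estimate on the potentials, which for these Monge--Amp\`ere equations yields all higher estimates. The essential estimate is a Schwarz-lemma bound for $u=\tr_{g_\varepsilon}h$. Here Proposition~\ref{prop:identity} gives
\[
\Delta_g u=|S|^2_{g,h}+I_g(h)-\RR_g(h)+\tfrac14D_g,
\]
with $I_g(h)=-u+\varepsilon\tr_g(\omega_0\text{ on }h)\ge -u$, and Lemma~\ref{lem:averaging} gives $-\RR_g(h)\ge0$.

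The obstacle is the term $D_g$, which has no sign pointwise. For this reason the maximum principle does not apply directly, and only the integrated identity~\eqref{eq:integrated-identity} is available. I would therefore run the argument in integral form. Integrate~\eqref{eq:integrated-identity} to obtain
\[
\int_M\bigl(u+\RR_g(h)\bigr)\dV_g\ \ge\ \varepsilon\int_M\tr\text{-terms}\ \ge 0,
\]
and combine it with $\int u\,\dV_g=\int\omega_h\wedge\omega_g^{n-1}/(n-1)!$, which is cohomologically controlled by $\omega_h$ against the class $\alpha+\varepsilon\beta$. From this I would extract a uniform lower bound on $\int\omega_h\wedge\omega_\varepsilon^{n-1}$. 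Next I would argue by contradiction: if $\alpha+\varepsilon_0\beta$ sits on the boundary of the K\"ahler cone, then the volume-type quantities degenerate along a subvariety. Demailly--P\u{a}un's description of the K\"ahler cone, together with the absence of rational curves and Mori's bend-and-break (which produces rational curves when $K_M$ is not nef), would then contradict this. Indeed, the cleanest route is probably this last one. For projective $M$, cone theorem plus no rational curves gives nefness immediately. For general K\"ahler $M$, I would invoke the K\"ahler version of the cone theorem (H\"oring--Peternell in dimension $3$; in general, results of Cao--H\"oring/Ou type) and expect this step to be the main obstacle, so the analytic estimate above serves as the fallback.
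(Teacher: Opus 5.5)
Your rational-curve argument is correct, but it differs from the paper's in technique. You bound the curvature of the branched metric $f^*h$ on $T\PP^1$ by the curvature-decreasing property of holomorphic subbundles and conclude with Gauss--Bonnet. The paper instead applies the one-dimensional Chern--Lu formula to $e=|\dd f|^2_{\gamma,h}$ and uses the maximum principle. That needs no branch-point analysis, since $e$ is smooth and the Bochner formula holds everywhere. There is one wording slip. Near a critical point of order $m$ one has $-dd^c\log|z|^{2m}=-2\pi m\,\delta_0$, so the Dirac masses enter the curvature current of $f^*h$ with a \emph{negative} sign. That is what your displayed identity uses; with positive masses no contradiction would follow.

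The nefness half has a genuine gap, and the analytic fallback does not close. Because $D_g$ has no sign, you only have integral information. That gives neither a $C^0$ bound for the potentials of $\Ric(\omega_\varepsilon)=-\omega_\varepsilon+\varepsilon\omega_0$ nor what the Tosatti--Yang argument really needs, a uniform pointwise bound $\omega_\varepsilon\geq c\,\omega_h$ as $\varepsilon\downarrow\varepsilon_0$. What the integrated identity does give is the following. Using $P_\varepsilon=\tr(g_\varepsilon^{-1}\omega_0g_\varepsilon^{-1}h)\geq c\,u^2/n$ and Cauchy--Schwarz, one gets $\int_M u\,dV_\varepsilon\leq C\,V(\varepsilon)$ for $\varepsilon$ near $\varepsilon_0>0$. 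Hence $V'\leq CV$ and $\int_M(\alpha+\varepsilon_0\beta)^n>0$. But a nef and big class can lie on the boundary of the K\"ahler cone, so nothing is contradicted. Your claimed degeneration of volume-type quantities along a subvariety therefore has no mechanism behind it, and the cone theorem settles only the projective case.

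The missing idea is the precise reduction the paper takes from Tang, in two steps:
\begin{itemize}
\item Since $M$ has no rational curves, it is not uniruled. Ou's theorem (a compact K\"ahler manifold is uniruled if and only if its canonical bundle is not pseudoeffective) then makes $K_M$ pseudoeffective.
\item If $K_M$ were pseudoeffective but not nef, the Cao--H\"oring theorem would produce a rational curve $C$ with $K_M\cdot C<0$, contradicting the first part. The pseudoeffectivity hypothesis it requires in lower dimensions is again supplied by Ou's theorem.
\end{itemize}
You name these authors, but this two-step argument is the proof itself, not an obstacle to be bypassed analytically.
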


\begin{proof}
We include the curve argument because the curvature metric need not
be K\"ahler. Suppose $f:\PP^1\to M$ is a nonconstant holomorphic
map. Equip $\PP^1$ with a Fubini--Study metric $\gamma$ normalized
by $\Ric(\omega_\gamma)=\lambda\omega_\gamma$, where $\lambda>0$,
and set $e=|\dd f|_{\gamma,h}^2$. For a local $\gamma$-unit vector $v$,
the Chern--Lu formula in
complex dimension one is
\[
 \Delta_\gamma e
 =|\nabla\dd f|^2+\lambda e
  -R^h\bigl(\dd f(v),\overline{\dd f(v)},
                      \dd f(v),\overline{\dd f(v)}\bigr).
\]
This is the usual holomorphic-map Bochner formula
\cite{Lu68,YauSchwarz78}; the target curvature term has only one
direction because the source is a curve. Since $H_h\leq0$, we
obtain $\Delta_\gamma e\geq\lambda e$. At a point where
$e$ attains its positive maximum, this contradicts
$\Delta_\gamma e\leq0$. Thus every such map is constant. The same
argument excludes singular rational curves after normalization.

For nefness, we use the reduction in
\cite[Theorem~1.1(1)]{Tang26}. By Ou's theorem
\cite[Theorem~1.1]{Ou25}, a compact K\"ahler manifold is uniruled
if and only if its canonical bundle is not pseudoeffective. Since
$M$ contains no rational curves, it is not uniruled, and $K_M$ is
pseudoeffective. If $K_M$ were not nef, the theorem of Cao and
H\"oring \cite[Theorem~1.3]{CH20} would produce a
$K_M$-negative rational curve. The pseudoeffectivity hypothesis in
lower dimensions required by that theorem is supplied by Ou's theorem,
which holds in every dimension. This contradicts the first
part of the proof, and establishes nefness.
\end{proof}

\subsection{A zero-direction curvature lemma}

The next lemma shows that nonpositive holomorphic sectional curvature
prevents the auxiliary forms from developing a null direction.

\begin{lemma}\label{lem:zero-direction}
Let $\gamma$ be a smooth, closed, semipositive $(1,1)$-form near a
point $x$, with $\operatorname{rank}\gamma_x=n-1$. If $H_h\leq0$,
then for every nonzero $v\in\ker\gamma_x$,
\[
 \operatorname{Ric}\bigl(\mu_h(\gamma)\bigr)(v,\bar v)\leq0,
 \qquad
 \mu_h(\gamma):=\frac{\omega_h\wedge\gamma^{n-1}}{(n-1)!}.
\]
Here, for a positive volume form $F\,dV_{\rm eucl}$, we set
$\operatorname{Ric}(F\,dV_{\rm eucl})=-dd^c\log F$.
In the coordinates used below,
$dV_{\rm eucl}:=\prod_{j=1}^n(\ii\,\dd z^j\wedge\dd\bar z^j)$.
Since the $n-1$ positive eigenvalues of $\gamma_x$ remain positive
near $x$, $\mu_h(\gamma)$ is a positive volume form on a smaller
neighborhood of $x$.
\end{lemma}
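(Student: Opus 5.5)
The plan is to reduce the claim to a one-variable computation along the kernel direction, in adapted coordinates. Put $F=\mu_h(\gamma)/dV_{\rm eucl}$. Then $\operatorname{Ric}(\mu_h(\gamma))(v,\bar v)\le 0$ means $\partial_v\partial_{\bar v}\log F\ge0$ at $x$. In coordinates,
\[
 F=\sum_{i,j}h_{i\bar j}\,C^{j\bar i}(\gamma),
\]
where $C(\gamma)$ is the cofactor (adjugate) matrix of $(\gamma_{i\bar j})$. Choose holomorphic coordinates centred at $x$ with $v=\partial_1$ and $\gamma_{i\bar j}(x)=\operatorname{diag}(0,1,\dots,1)$. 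A further linear change makes $h_{1\bar j}(x)=0$ for $j\ge2$, and a quadratic change $z^j\mapsto z^j+c^j(z^1)^2$ can also kill $\partial_1h_{1\bar j}(x)$ without disturbing $\gamma_x$. Since $\partial_1\partial_{\bar1}$ at $x$ only sees the restriction to the disc $L=\{z^2=\cdots=z^n=0\}$, all Taylor expansions below are taken along $L$.

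\emph{Step 1: vanishing orders of $\gamma$.} The function $\gamma_{1\bar1}\ge0$ vanishes at $x$, so $\dd\gamma_{1\bar1}(x)=0$ and $c:=\partial_1\partial_{\bar1}\gamma_{1\bar1}(x)\ge0$. Semipositivity gives $|\gamma_{1\bar k}|^2\le\gamma_{1\bar1}\gamma_{k\bar k}$, hence $\gamma_{1\bar k}=O(|z|)$. Closedness, $\partial_k\gamma_{1\bar1}=\partial_1\gamma_{k\bar1}$, then gives $\partial_1\gamma_{k\bar1}(x)=0$, and by conjugation $\partial_{\bar1}\gamma_{1\bar k}(x)=0$. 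Consequently, with $a:=C^{1\bar1}=\det(\gamma_{a\bar b})_{a,b\ge2}$, which is positive near $x$:
\begin{itemize}
\item the cofactors $C^{j\bar1}$, $C^{1\bar j}$ with $j\ge2$ vanish at $x$, and their $\partial_1$ and $\partial_{\bar1}$ derivatives there are controlled by the mixed derivatives $\partial_{\bar1}\gamma_{k\bar1}$;
\item the cofactors $C^{i\bar j}$ with $i,j\ge2$ are combinations of $\gamma_{1\bar1}$ and products $\gamma_{1\bar k}\gamma_{l\bar1}$, hence $O(|z|^2)$ along $L$.
\end{itemize}

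\emph{Step 2: expansion of $\log F$.} Write $F=h_{1\bar1}a+E$, where $E$ collects all the other terms. By the normalisation, $E(x)=0$ and $\dd E(x)=0$ along $L$; the terms $h_{1\bar j}C^{j\bar1}$ have a double zero because both factors vanish at $x$ to first order. Therefore
\[
 \partial_1\partial_{\bar1}\log F(x)
 =\partial_1\partial_{\bar1}\log h_{1\bar1}
 +\partial_1\partial_{\bar1}\log a
 +\frac{\partial_1\partial_{\bar1}E}{h_{1\bar1}a}.
\]
Each term is handled separately.
\begin{itemize}
\item For the first term, $\partial_1$ is a holomorphic section of $T^{1,0}M$, and curvature decreases on holomorphic line subbundles. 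This gives
\[
 \partial_1\partial_{\bar1}\log h_{1\bar1}\ge-\frac{R^h_{1\bar11\bar1}}{h_{1\bar1}}=-h_{1\bar1}H_h(x,[\partial_1])\ge0.
\]
This is the only place where $H_h\le0$ enters.
\item The term $\partial_1\partial_{\bar1}E$ contains the nonnegative contribution $h_{a\bar b}\,\partial_1\partial_{\bar1}C^{b\bar a}$. This comes from $c\ge0$ times the positive-definite $(n-2)$-cofactors of $\gamma$, plus terms of the form $|\partial_1\gamma_{1\bar k}|^2$ weighted by the positive matrix $h_{a\bar b}$.
\item The term $\partial_1\partial_{\bar1}\log a$ is the remaining issue. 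Here closedness is used again, $\partial_1\gamma_{a\bar b}=\partial_a\gamma_{1\bar b}$, to rewrite the derivatives of $\log\det(\gamma_{a\bar b})$ in direction $1$ through the entries $\gamma_{1\bar b}$.
\end{itemize}

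\emph{Main obstacle.} The hard step is to show that the possibly negative part of $\partial_1\partial_{\bar1}\log a$, namely $-|\partial_1\log a|^2$ together with the cross terms $\gamma^{a\bar d}\gamma^{c\bar b}\partial_1\gamma_{a\bar b}\partial_{\bar1}\gamma_{c\bar d}$, is absorbed by the positive $O(|z|^2)$ contributions coming from the cofactors $C^{a\bar b}$ and from $c$. The first thing I would try is a completing-the-square (Cauchy--Schwarz) argument using the pointwise semipositivity of the full $n\times n$ matrix $\gamma$ near $x$: apply it to the $2\times2$ minors $\begin{pmatrix}\gamma_{1\bar1}&\gamma_{1\bar b}\\ \gamma_{a\bar1}&\gamma_{a\bar b}\end{pmatrix}$, differentiated twice along $L$. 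If the pointwise bookkeeping becomes unwieldy, the fallback is to replace $\gamma$ by $\gamma+\varepsilon\,\ii\,\dd z^1\wedge\dd\bar z^1$ and use the strictly positive determinant identity $\partial\bar\partial\log\det$. Then $\omega_h\wedge\gamma_\varepsilon^{n-1}$ is an average of such determinants via the polarisation $\omega_h\wedge\gamma^{n-1}=\tfrac1n\,\tfrac{d}{ds}\big|_{s=0}(\gamma+s\omega_h)^n$, and one passes to the limit $\varepsilon\to0$, keeping track of the sign of the $\omega_h$-contribution through Step 2.
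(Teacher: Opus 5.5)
\textbf{There is a genuine gap.} Your setup and the splitting $\partial_1\partial_{\bar1}\log F(x)=(\log h_{1\bar1})_{1\bar1}+(\log a)_{1\bar1}+E_{1\bar1}/h_{1\bar1}$ are correct, and you handle the $h$-term properly. The proposal, however, stops exactly where the content of the lemma lies: you never prove that the $\gamma$-terms are nonnegative, and you describe their sign wrongly.

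You spend the quadratic coordinate freedom on $\partial_1h_{1\bar j}(x)$. As a result the first derivatives $\xi_a=\partial_1\gamma_{1\bar a}(x)$ and $\partial_1\gamma_{a\bar b}(x)=\partial_a\gamma_{1\bar b}(x)$ ($a,b\ge2$) survive, and they enter with a \emph{negative} sign. Already for $n=3$ one has $\partial_1\partial_{\bar1}C^{2\bar2}=c-|\xi_3|^2$. In general:
\begin{itemize}
\item $E_{1\bar1}(x)=c\sum_ah_{a\bar a}-\bigl(|\xi|^2\sum_ah_{a\bar a}-\sum_{a,b}h_{a\bar b}\xi_a\overline{\xi_b}\bigr)$, where the bracket is $\ge0$;
\item $(\log a)_{1\bar1}(x)=\sum_a\partial_a\partial_{\bar a}\gamma_{1\bar1}-\sum_{a,b}|\partial_a\gamma_{1\bar b}|^2$.
\end{itemize}
To absorb these negative terms you would need the full Schur-complement inequality $\gamma_{1\bar1}\ge rG^{-1}r^*$, with $r=(\gamma_{1\bar b})_{b\ge2}$ and $G=(\gamma_{a\bar b})_{a,b\ge2}$. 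It must be expanded to second order at $x$ in \emph{every} direction $\zeta$ and averaged over $\zeta\mapsto e^{i\theta}\zeta$, which gives $\sum_{k,l}\zeta^k\bar\zeta^l\partial_k\partial_{\bar l}\gamma_{1\bar1}\ge\sum_b\bigl|\sum_k\zeta^k\partial_k\gamma_{1\bar b}\bigr|^2$. Taking $\zeta=e_1$ handles $E$, and summing over $\zeta=e_a$ handles $\log a$.

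The tools you propose do not deliver this:
\begin{itemize}
\item Principal $2\times2$ minors along $L$ give only $c\ge|\xi_b|^2$ for each $b$ separately, whereas $c\ge|\xi|^2$ is needed. The non-principal minors you display carry no sign at all.
\item After closedness, the negative part of $(\log a)_{1\bar1}$ is a transverse quantity, invisible to expansions along $L$.
\item The $\varepsilon$-fallback yields no sign: $-dd^c\log\det\gamma_\varepsilon$ is an arbitrary Ricci form, and $\log$ of an $s$-derivative of $\det(\gamma_\varepsilon+sh)$ is not an average of log-determinants.
\end{itemize}

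\textbf{The missing idea,} which is how the paper proceeds, is to spend the quadratic coordinate freedom on $\gamma$ instead. Closedness makes $\partial_k\gamma_{i\bar a}(x)$ symmetric in $i,k$. A change $z\mapsto z+\frac12Q(z,z)$ with identity linear part shifts these derivatives by $Q^a{}_{ik}$ for $a\ge2$, so all of them can be killed. Together with $\dd\gamma_{1\bar1}(x)=0$ and closedness, this gives $\partial_1\gamma_{i\bar j}(x)=\partial_{\bar1}\gamma_{i\bar j}(x)=0$ for all $i,j$. Every term quadratic in first derivatives of $\gamma$ then disappears. Differentiating $F=\frac{d}{ds}\det(\gamma+sh)|_{s=0}$ and using $\partial_1\partial_{\bar1}\gamma_{a\bar a}=\partial_a\partial_{\bar a}\gamma_{1\bar1}$ gives
\[
 (\log F)_{1\bar1}=(\log h_{1\bar1})_{1\bar1}+\sum_{a\ge2}\partial_a\partial_{\bar a}\gamma_{1\bar1}+\frac{\sum_{a\ge2}h_{a\bar a}}{h_{1\bar1}}\,\partial_1\partial_{\bar1}\gamma_{1\bar1}.
\]
The last two terms are nonnegative because $\gamma_{1\bar1}$ has a minimum at $x$. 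The first is nonnegative by $H_h\le0$ and Cauchy--Schwarz, with no normalization of $\partial_1h_{1\bar j}$ needed. Note that your change $z^j\mapsto z^j+c^j(z^1)^2$ uses up exactly the components $Q^j{}_{11}$ that are needed to kill $\xi_j$.
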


\begin{proof}
We work at the point $x$. Choose linear holomorphic coordinates so that
the kernel direction is the first coordinate direction and, at $x$,
\[
 v=\partial_1,
 \qquad
 (\gamma_{i\bar j})=\operatorname{diag}(0,1,\ldots,1),
 \qquad h_{1\bar a}=0\quad(a\geq2).
\]
The first two conditions normalize the kernel and the positive
restriction of $\gamma$ to its complement; the last one means that
the complement is chosen $h$-orthogonal to $v$.

Because $\gamma$ is semipositive and
$\gamma_{1\bar1}(x)=0$, the function $\gamma_{1\bar1}$ has a local
minimum at $x$. Hence
\[
 \partial_k\gamma_{1\bar1}(x)=0
 \qquad\text{for every }k.
\]
The closedness condition $d\gamma=0$ gives
\[
 \partial_k\gamma_{i\bar a}
 =\partial_i\gamma_{k\bar a}.
\]
Thus, for each $a\geq2$, the first derivatives
$\partial_k\gamma_{i\bar a}(x)$ are symmetric in $i$ and $k$. A
holomorphic coordinate change with identity linear part and suitable
quadratic terms changes these derivatives by a symmetric tensor.
Choosing that tensor appropriately, we may arrange that
\[
 \partial_k\gamma_{i\bar a}=0\quad(a\geq2)
\]
at $x$. This change does not affect the normalization at $x$ or its
first-order part. Applying closedness once more, together with
$\partial_k\gamma_{1\bar1}(x)=0$, gives
\[
 \partial_1\gamma_{i\bar j}(x)
 =\partial_{\bar1}\gamma_{i\bar j}(x)=0
 \qquad\text{for all }i,j.
\]

Write $\mu_h(\gamma)=F\,dV_{\rm eucl}$. Since
$F$ is the coefficient of $s$ in
$\det(\gamma+s h)$ at $s=0$, the normalization gives
$F(x)=h_{1\bar1}(x)$. Differentiating this determinant in the
$\partial_1$ direction, and using the vanishing of the first
derivatives above, gives
\[
\begin{aligned}
(\log F)_{1\bar1}
={}&(\log h_{1\bar1})_{1\bar1}
+\sum_{a=2}^n
  \partial_1\partial_{\bar1}\gamma_{a\bar a}
+\frac{\sum_{a=2}^n h_{a\bar a}}{h_{1\bar1}}\,
  \partial_1\partial_{\bar1}\gamma_{1\bar1}.
\end{aligned}
\]
The first term records the variation of $h$; the other two terms
record the variation of the positive and zero eigenvalues of
$\gamma$. Closedness also gives
\[
 \partial_1\partial_{\bar1}\gamma_{a\bar a}
 =\partial_a\partial_{\bar a}\gamma_{1\bar1}
 \qquad(a\geq2).
\]
Since $\gamma_{1\bar1}$ has a local minimum at $x$, both
$\partial_a\partial_{\bar a}\gamma_{1\bar1}$ and
$\partial_1\partial_{\bar1}\gamma_{1\bar1}$ are nonnegative there.
The coefficient multiplying the latter is positive because $h$ is
positive definite. Thus the last two terms are nonnegative.

It remains to examine $(\log h_{1\bar1})_{1\bar1}$. Direct
differentiation gives
\[
\begin{aligned}
(\log h_{1\bar1})_{1\bar1}
={}&-\frac{R^h_{1\bar1 1\bar1}}{h_{1\bar1}}+\frac{h^{p\bar q}(\partial_1h_{1\bar q})
 (\partial_{\bar1}h_{p\bar1})}{h_{1\bar1}}
 -\frac{|\partial_1h_{1\bar1}|^2}{h_{1\bar1}^2}.
\end{aligned}
\]
Let $\eta_q=\partial_1h_{1\bar q}$. Cauchy--Schwarz for the positive
Hermitian form $h^{-1}$ gives
\[
 h^{p\bar q}\eta_q\overline{\eta_p}
 \geq \frac{|\eta_1|^2}{h_{1\bar1}}.
\]
Hence the sum of the last two terms in the displayed formula is
nonnegative.
Finally, $H_h\leq0$ implies
$R^h_{1\bar1 1\bar1}\leq0$ in this direction, so the first term is
also nonnegative. We conclude that
$(\log F)_{1\bar1}\geq0$.

Since
$\operatorname{Ric}(F\,dV_{\rm eucl})=-dd^c\log F$ and
$v=\partial_1$ at $x$, this is exactly
\[
 \operatorname{Ric}\bigl(\mu_h(\gamma)\bigr)(v,\bar v)\leq0.
\]
\end{proof}

\subsection{The \texorpdfstring{$(n-1)$}{(n-1)}-Hessian auxiliary equation}

Set
\[
 \Omega=\frac{\omega_0^n}{n!},
 \qquad
 \alpha=2\pi c_1(K_M),
 \qquad
 \beta=[\omega_0].
\]
By Lemma~\ref{lem:nef}, $\alpha$ is nef, so $\alpha+t\beta$ is a
K\"ahler class for every $t>0$. We replace the usual twisted
K\"ahler--Einstein equation by
\begin{equation}\label{eq:hessian-equation}
 \omega_t=-\operatorname{Ric}(\Omega)+t\omega_0+dd^c\varphi_t>0,
 \qquad
 \mu_h(\omega_t)=e^{\varphi_t}\Omega.
\end{equation}
The equation is the case $m=n-1$ of the exponential complex Hessian
equation. We use the smooth solvability result
\cite[Theorem~9.2]{PSWZ25}; it gives an admissible solution, but not
automatically a positive definite one. The next lemma supplies the
missing positivity argument.

\begin{lemma}\label{lem:hessian}
For every $t>0$, equation~\eqref{eq:hessian-equation} has a smooth
solution, and the resulting form $\omega_t$ is K\"ahler with
$[\omega_t]=\alpha+t\beta$.
\end{lemma}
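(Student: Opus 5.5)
Existence comes from \cite[Theorem~9.2]{PSWZ25}, applied with $m=n-1$, background class $\alpha+t\beta$ and the positive $(1,1)$-form $\omega_h$ in the mixed term. To set this up, I note that $-\operatorname{Ric}(\Omega)$ represents $2\pi c_1(K_M)=\alpha$, so the reference form $\chi_t=-\operatorname{Ric}(\Omega)+t\omega_0$ lies in $\alpha+t\beta$. Since $\alpha$ is nef by Lemma~\ref{lem:nef}, this class is K\"ahler for $t>0$. The cited theorem requires a subsolution or cone condition for the reference data. I will meet it by replacing the representative: pick a K\"ahler form $\hat\omega_t\in\alpha+t\beta$ and write $\hat\omega_t=\chi_t+dd^c\psi_t$, so the unknown becomes $\varphi_t-\psi_t$ relative to a positive reference. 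The theorem then returns a smooth $\varphi_t$ for which $\omega_t$ is admissible, i.e.\ lies in the $\Gamma_{n-1}$-cone, and $\mu_h(\omega_t)=e^{\varphi_t}\Omega$ holds. The class statement $[\omega_t]=\alpha+t\beta$ is then immediate. What remains is to show that $\omega_t$ is positive definite.

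For positivity I will use a continuity argument in $t$, or along the deformation path used to produce the solution. Let
\[
 \lambda_{\min}(x,s)=\min_{|v|_{\omega_0}=1}\omega_s(v,\bar v).
\]
At large $t$ (or at the start of the path) the form is K\"ahler. If positivity fails somewhere, there is a first parameter $s$ at which $\omega_s\geq0$ everywhere and, at some point $x$, $\omega_s$ has a null direction $v$. The $\Gamma_{n-1}$-admissibility says $\sigma_{n-1}>0$, and together with semipositivity this forces $\operatorname{rank}\omega_s(x)=n-1$. Near $x$, $\gamma=\omega_s$ is closed and semipositive, so Lemma~\ref{lem:zero-direction} applies and gives $\operatorname{Ric}(\mu_h(\gamma))(v,\bar v)\leq0$. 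On the other hand, taking $-dd^c\log$ of the equation gives
\[
 \operatorname{Ric}(\mu_h(\omega_s))=\operatorname{Ric}(\Omega)-dd^c\varphi_s=-\omega_s+s\omega_0.
\]
Evaluating at $v$ yields $0-0+s|v|^2_{\omega_0}>0$, which is a contradiction. Hence no null direction can appear, and $\omega_t>0$. For this to work along a continuity path, each intermediate equation needs the same structure: closed forms in a fixed class and a volume equation whose Ricci form is $-\omega+(\text{positive})$. I will therefore choose the path with a positive twist $s\omega_0$ at every stage, or run the argument directly on $t\in(0,\infty)$ using smooth dependence of solutions on $t$.

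I expect this last point to be the main obstacle. The first-touching argument needs the solutions to vary continuously in the parameter, and it needs a starting point at which $\omega$ is positive. Uniqueness for exponential Hessian equations, via the maximum principle applied to the $e^{\varphi}$ term, should give continuity through the implicit function theorem, with the linearization invertible thanks to the zeroth-order term. For the starting point I would either take $t$ very large, where $\chi_t$ is dominated by $t\omega_0$ and a perturbation argument gives $\omega_t$ close to a positive form, or take the starting point supplied by the PSWZ method of continuity. Once this is in place, the pointwise contradiction above finishes the proof.
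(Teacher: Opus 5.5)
\textbf{Verdict.} Your proposal has a genuine gap: the first-touching argument is never given a starting point.

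Your outline has the same architecture as the paper's proof. You take an admissible solution from \cite[Theorem~9.2]{PSWZ25}, use uniqueness and the implicit function theorem for smooth dependence on a parameter, and run a first-touching argument. There the identity $\operatorname{Ric}(\mu_h(\omega))=-\omega+(\text{positive twist})$, evaluated on a null vector, contradicts Lemma~\ref{lem:zero-direction}. That contradiction step, including the rank-$(n-1)$ reduction, is correct.

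The missing piece is the one you flag yourself and leave open. The touching argument needs a parameter value at which the solution is \emph{known} to be positive definite, on a path along which the twist stays positive. Neither of your two options supplies this.
\begin{itemize}
\item On the $t$-path, positivity at large $t$ amounts to $dd^c\varphi_t=o(t)$. After rescaling $\tilde\omega=\omega_t/t$, $\epsilon=1/t$, the equation becomes $\tilde\varphi=\epsilon\log(\mu_h(\tilde\omega)/\Omega)+\mathrm{const}$. This degenerates at $\epsilon=0$, where the linearization is the inclusion $C^{2,\alpha}\hookrightarrow C^{0,\alpha}$. So no perturbation argument is available without $t$-uniform second-order estimates, which you do not provide.
\item Consider a continuity path of the usual type in the fixed class $\alpha+t\beta$, starting from a K\"ahler reference form $\theta$ so that $\psi=0$ solves the initial equation. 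It does start at a positive form. But its twist interpolates affinely between $\theta+\operatorname{Ric}(\mu_h(\theta))$ and $t\omega_0$, and the first of these need not be positive. So at a touching parameter $B(v,\bar v)$ may be $\le 0$, and there is no contradiction.
\end{itemize}

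\textbf{The idea you are missing} is the scaling identity $\mu_h(L\omega_0)=L^{n-1}\mu_h(\omega_0)$, which makes $\operatorname{Ric}(\mu_h(L\omega_0))$ independent of $L$. Hence for $L\gg1$ the form $B_0=L\omega_0+\operatorname{Ric}(\mu_h(L\omega_0))$ is positive. Then $\gamma_0=L\omega_0$ with $\phi_0=\log(\mu_h(\gamma_0)/\Omega)$ is an explicit positive solution of $\gamma=-\operatorname{Ric}(\Omega)+B_0+dd^c\phi$, $\mu_h(\gamma)=e^{\phi}\Omega$.

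The paper then deforms the twist linearly, $B_s=(1-s)B_0+st\omega_0>0$, letting the class move through the K\"ahler classes $(1-s)L\beta+s(\alpha+t\beta)$. Your touching argument then runs verbatim with $B_s$ in place of $s\omega_0$.

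The same device would also rescue your large-$t$ option. Put $\hat\omega_T=\hat\omega_1+(T-1)\omega_0$. Then $\hat\omega_T+\operatorname{Ric}(\mu_h(\hat\omega_T))=T\omega_0+O(1)>0$ for $T\gg1$. So the fixed-class path from $\hat\omega_T$ has positive twist throughout and gives $\omega_T>0$, after which you can descend in $t$ along the twist $t\omega_0$.
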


\begin{proof}
Fix $t>0$. We connect the desired equation to one with an obvious
positive solution. Let $\gamma_0=L\omega_0$. Since
\[
 \mu_h(L\omega_0)=L^{\,n-1}\mu_h(\omega_0),
\]
the Ricci form of $\mu_h(L\omega_0)$ does not depend on $L$. We may
therefore choose $L$ so large that
\[
 B_0:=\gamma_0+\operatorname{Ric}\bigl(\mu_h(\gamma_0)\bigr)>0.
\]
For $0\leq s\leq1$, set
\[
 B_s=(1-s)B_0+s\,t\omega_0,
 \qquad
 \chi_s=-\operatorname{Ric}(\Omega)+B_s.
\]
Both $B_0$ and $t\omega_0$ are positive closed forms, so $B_s>0$.
Furthermore,
\[
 [\chi_s]=(1-s)L\beta+s(\alpha+t\beta),
\]
because $[\operatorname{Ric}(\mu_h(\gamma_0))]=-\alpha$ and
$[-\operatorname{Ric}(\Omega)]=\alpha$. Hence $[\chi_s]$ is a
K\"ahler class for every $s$.
Choose a K\"ahler form $\widehat\omega_t\in\alpha+t\beta$ and define
\[
 \theta_s=(1-s)\gamma_0+s\widehat\omega_t.
\]
Then $\theta_s$ is K\"ahler and $[\theta_s]=[\chi_s]$. At $s=0$, define
\[
 \phi_0=\log\frac{\mu_h(\gamma_0)}{\Omega},
 \qquad
 \rho_0=-\phi_0.
\]
Since
\[
 dd^c\phi_0=\operatorname{Ric}(\Omega)
             -\operatorname{Ric}\bigl(\mu_h(\gamma_0)\bigr),
\]
we have $\chi_0=\theta_0+dd^c\rho_0$. At $s=1$, the
$\partial\bar\partial$-lemma gives a real function $\rho_1$ with
$\chi_1=\theta_1+dd^c\rho_1$. Set
\[
 \rho_s=(1-s)\rho_0+s\rho_1.
\]
Because $\chi_s-\theta_s$ depends affinely on $s$, this gives
$\chi_s=\theta_s+dd^c\rho_s$ for every $s$ and makes the family
smooth.
If $\gamma_s=\chi_s+dd^c\phi_s$ and
$\psi_s=\phi_s+\rho_s$, then $\gamma_s=\theta_s+dd^c\psi_s$ and
\[
 (\theta_s+dd^c\psi_s)^{n-1}\wedge\omega_h
 =e^{\psi_s+F_s}\omega_h^n,
 \qquad
 F_s=\log\frac{(n-1)!\,\Omega}{\omega_h^n}-\rho_s.
\]

Theorem~9.2 of~\cite{PSWZ25} applies because $\theta_s$ is K\"ahler
and hence $(\omega_h,n-1)$-positive. It gives a smooth admissible
solution $\psi_s$ for every $s$. The exponential term fixes the
additive constant, so the solution is unique. The linearized operator
has the form $\mathcal L_s-1$, where $\mathcal L_s$ is positive
elliptic. The maximum principle gives a trivial kernel. Since this
scalar elliptic operator has Fredholm index zero, it is invertible.
The implicit function theorem
therefore shows that the solutions vary smoothly with $s$; uniqueness
makes the local families agree on overlaps.

We next show that these admissible solutions are actually positive
definite. At $s=0$, the equation is solved by $\psi_0=0$; uniqueness
therefore identifies this with the solution chosen above. Moreover,
\[
 \chi_0+dd^c\phi_0=\gamma_0>0.
\]
For each $s$, set
\[
 \phi_s=\psi_s-\rho_s,
 \qquad
 \gamma_s=\chi_s+dd^c\phi_s.
\]
Thus $\gamma_s$ solves the original auxiliary equation.

Suppose that $\gamma_s$ first ceases to be positive definite at
$s=s_*$. By continuity, $\gamma_{s_*}\geq0$ and has a nonzero kernel.
The equation gives
$\mu_h(\gamma_{s_*})=e^{\phi_{s_*}}\Omega>0$. Since a semipositive
form with rank at most $n-2$ would make
$\omega_h\wedge\gamma_{s_*}^{\,n-1}$ vanish, the rank of
$\gamma_{s_*}$ is exactly $n-1$.
Taking Ricci forms in
$\mu_h(\gamma_s)=e^{\phi_s}\Omega$ gives
\[
 \operatorname{Ric}\bigl(\mu_h(\gamma_s)\bigr)
 =\operatorname{Ric}(\Omega)-dd^c\phi_s
 =-\gamma_s+B_s.
\]
For $0\ne v\in\ker\gamma_{s_*}$, this yields
\[
 \operatorname{Ric}\bigl(\mu_h(\gamma_{s_*})\bigr)(v,\bar v)
 =B_{s_*}(v,\bar v)>0,
\]
because $B_{s_*}$ is positive definite. This contradicts
Lemma~\ref{lem:zero-direction}, which gives the opposite inequality.
Thus $\gamma_s$ remains positive definite for all $s$. At $s=1$,
$B_1=t\omega_0$ and
$\chi_1=-\operatorname{Ric}(\Omega)+t\omega_0$, so
$\gamma_1=\omega_t$ solves~\eqref{eq:hessian-equation}.
\end{proof}

For the solution in Lemma~\ref{lem:hessian}, set
\[
 u_t=\operatorname{tr}_{\omega_t}\omega_h,
 \qquad
 dV_t=\frac{\omega_t^n}{n!},
 \qquad
 d\mu_t=u_t\,dV_t.
\]
The mixed-volume identity and equation~\eqref{eq:hessian-equation} give
\begin{equation}\label{eq:mixed-trace}
 d\mu_t=\frac{\omega_h\wedge\omega_t^{n-1}}{(n-1)!}
 =e^{\varphi_t}\Omega.
\end{equation}
In particular,
\begin{equation}\label{eq:ricci-hessian}
 \operatorname{Ric}(\omega_t)
 =-\omega_t+t\omega_0+dd^c\log u_t.
\end{equation}

\subsection{The integrated estimate}

We now combine the integrated Chern--Lu identity with the extra
Ricci term in~\eqref{eq:ricci-hessian}. For a K\"ahler metric $g$, put
\[
 u=\operatorname{tr}_g h,
 \qquad
 a_i=\partial_i u,
 \qquad
 b_i=g^{k\bar l}\nabla^g_k h_{i\bar l},
\]
and define
\[
 \sigma=\frac{a+b}{2},
 \qquad
 \tau=a-b.
\]
We now explain the relation between $\sigma$ and the symmetric
connection difference $S$. Fix a point and choose a $g$-unitary frame
which is normal for the Chern connection of $g$ at that point. Since
$\nabla^g g=0$ and $u=\sum_k h_{k\bar k}$ in this frame, we have
\[
 a_i=\sum_k\nabla^g_i h_{k\bar k}.
\]
Moreover, 
\[
 \nabla^g_i h_{k\bar l}=h_{p\bar l}A^p{}_{ik}.
\]
Consequently,
\[
 a_i=\sum_{k,p}h_{p\bar k}A^p{}_{ik},
 \qquad
 b_i=\sum_{k,p}h_{p\bar k}A^p{}_{ki}.
\]
Taking the average of these two identities gives the pointwise formula
\begin{equation}\label{eq:sigma-connection}
 \sigma_i=\sum_{k,p}h_{p\bar k}S^p{}_{ik}.
\end{equation}

It remains to estimate this contraction. For each fixed $i$, regard
$S_{ik}=(S^p{}_{ik})_p$ as a vector in $T^{1,0}M$ with the $h$-norm.
Then the right-hand side of~\eqref{eq:sigma-connection} is the sum of
the $h$-inner products $\langle S_{ik},e_k\rangle_h$, where
$e_1,\ldots,e_n$ is the chosen $g$-unitary frame. Cauchy--Schwarz in
the direct sum of these $n$ copies of $T^{1,0}M$ gives
\[
 |\sigma_i|^2
 \leq
 \left(\sum_k|S_{ik}|_h^2\right)
 \left(\sum_k|e_k|_h^2\right)
 =u\sum_k|S_{ik}|_h^2.
\]
Summing over $i$ yields
\begin{equation}\label{eq:sigma-bound}
 |\sigma|_g^2\leq u|S|_{g,h}^2.
\end{equation}
For a real function $f$, define
\[
 C_g(f)=\operatorname{tr}(g^{-1}dd^cf\,g^{-1}h).
\]
The following identity is just integration by parts, but we spell out
the indices. At a fixed point choose a $g$-unitary frame which is
normal for the Chern connection of $g$. Since $f$ is a scalar and
$g$ is the identity in this frame,
\[
 C_g(f)=\sum_{i,k}f_{i\bar k}h_{k\bar i},
 \qquad f_{i\bar k}=\partial_i\partial_{\bar k}f.
\]
Integrating first in the $\bar k$-direction and using that the volume
form of a K\"ahler metric is parallel, we obtain
\[
 \int_M C_g(f)\,dV_g
 =-\int_M\sum_{i,k}f_i\,\nabla^g_{\bar k}h_{k\bar i}\,dV_g.
\]
There is no boundary term because $M$ is compact. By the definition
of $b$,
\[
 \overline{b_i}=\sum_k\nabla^g_{\bar k}h_{k\bar i},
\]
so the last integrand is $-\sum_i f_i\overline{b_i}$. Since both
$C_g(f)$ and $f$ are real-valued, we may take the real part and write
the result invariantly as
\begin{equation}\label{eq:log-trace-integration}
 \int_M C_g(f)\,dV_g
 =-\int_M\operatorname{Re}\langle b,\partial f\rangle_g\,dV_g.
\end{equation}
Taking $f=\log u$ and using
$a=\sigma+\tau/2$, $b=\sigma-\tau/2$, we obtain
\begin{equation}\label{eq:log-trace-square}
 \int_M C_g(\log u)\,dV_g
 =\int_M\left(-\frac{|\sigma|_g^2}{u}
 +\frac{|\tau|_g^2}{4u}\right)dV_g.
\end{equation}

Apply the integrated identity~\eqref{eq:integrated-identity} with
$g=g_t$. From~\eqref{eq:ricci-hessian},
\[
 I_{g_t}(h)=-u_t+tP_t+C_{g_t}(\log u_t),
 \qquad
 P_t=\operatorname{tr}(g_t^{-1}\omega_0g_t^{-1}h)\geq0.
\]
Together with~\eqref{eq:log-trace-square}, this gives
\begin{equation}\label{eq:quasi-square-identity}
\begin{aligned}
0=\int_M\Bigg[
&|S|_{g_t,h}^2-\frac{|\sigma|_{g_t}^2}{u_t}
 +\frac{|\tau|_{g_t}^2}{4u_t}+tP_t-u_t-\RR_{g_t}(h)
\Bigg]dV_t.
\end{aligned}
\end{equation}
To read this identity, collect all terms except $-u_t-\RR_{g_t}(h)$
into
\[
 \mathcal E_t
 :=|S|_{g_t,h}^2-\frac{|\sigma|_{g_t}^2}{u_t}
   +\frac{|\tau|_{g_t}^2}{4u_t}+tP_t.
\]
The estimate $|\sigma|^2\leq u|S|^2$ shows that the first two terms
in $\mathcal E_t$ have a nonnegative sum, while the remaining two
terms are individually nonnegative. Hence $\mathcal E_t\geq0$ and
the identity implies
\begin{equation}\label{eq:integrated-curvature-bound}
 \int_M-\RR_{g_t}(h)\,dV_t
 =\int_Mu_t\,dV_t-\int_M\mathcal E_t\,dV_t
 \leq\int_Mu_t\,dV_t=J(t).
\end{equation}

We next estimate the left-hand side from below. At a point, choose a
$g_t$-unitary frame and let $\lambda_1,\ldots,\lambda_n$ be the
eigenvalues of $g_t^{-1}h$. Then
\[
 u_t=\sum_i\lambda_i,
 \qquad
 q_t=\sum_i\lambda_i^2\geq\frac{u_t^2}{n},
\]
where the last inequality is the elementary Cauchy--Schwarz inequality
for the $n$ numbers $\lambda_i$. By the averaging lemma and the
definition of $\kappa$,
\[
 -\RR_{g_t}(h)
 \geq\frac{\kappa}{2}(u_t^2+q_t)
 \geq\frac{n+1}{2n}\,\kappa u_t^2.
\]
Combining this pointwise estimate with~\eqref{eq:integrated-curvature-bound}
gives
\begin{equation}\label{eq:weighted-curvature-estimate}
 c_n\int_M\kappa u_t^2\,dV_t
 \leq\int_M u_t\,dV_t=J(t),
 \qquad c_n=\frac{n+1}{2n}.
\end{equation}

\subsection{Uniform mixed-volume control}

We now show that the open set $U$ on which the curvature is strictly
negative carries a fixed fraction of the mixed measure
$d\mu_t=u_t\,dV_t$. This is the step that converts the weighted
estimate~\eqref{eq:weighted-curvature-estimate} into an estimate for
the total mixed volume.

Normalize the potential by writing
\[
 v_t=\varphi_t-\sup_M\varphi_t.
\]
Then $v_t\leq0$ and $\sup_Mv_t=0$. Choose $C>0$ so that
\[
 \operatorname{Ric}(\Omega)-t\omega_0\geq-C\omega_0,
 \qquad 0<t\leq1.
\]
Equation~\eqref{eq:hessian-equation} then gives
$dd^cv_t>-C\omega_0$. Set $\Theta=C\omega_0$. Then
$\Theta+dd^cv_t\geq0$ and $\sup_Mv_t=0$, so Wu--Yau's
Lemma~3~\cite[Lemma~3]{WY16b} gives constants $\lambda,C_0>0$,
independent of $t$, such that
\[
 \int_Me^{-\lambda v_t}\,\Omega\leq C_0.
\]
This is stronger than the $L^1$ estimate needed here. Since $v_t\leq0$
and $x\leq\lambda^{-1}e^{\lambda x}$ for $x\geq0$, we obtain a uniform
bound for the negative part of $v_t$:
\begin{equation}\label{eq:normalized-L1}
 \int_M|v_t|\,\Omega\leq C_1,
\end{equation}
for $0<t\leq1$. Since $v_t\leq0$, this is the same as
$-\int_Mv_t\,\Omega\leq C_1$.

We next apply Jensen's inequality to the convex function $e^x$ on the
fixed set $U$. Since
$\int_Uv_t\,\Omega\geq-\int_M|v_t|\,\Omega\geq-C_1$, we obtain
\[
 \begin{aligned}
 \int_Ue^{v_t}\Omega
 &\geq \Omega(U)
 \exp\left(\frac1{\Omega(U)}\int_Uv_t\,\Omega\right)\geq \Omega(U)\exp\left(-\frac{C_1}{\Omega(U)}\right)=:b>0.
 \end{aligned}
\]
On the other hand, $v_t\leq0$ gives
$\int_Me^{v_t}\Omega\leq\Omega(M)$. Since
$d\mu_t=e^{\varphi_t}\Omega=e^{\sup_M\varphi_t}e^{v_t}\Omega$, the
factor $e^{\sup_M\varphi_t}$ cancels in the ratio, and hence
\begin{equation}\label{eq:mu-proportion}
 \frac{\mu_t(U)}{J(t)}
 =\frac{\int_Ue^{v_t}\Omega}{\int_Me^{v_t}\Omega}
 \geq\frac{b}{\Omega(M)}=:\delta>0.
\end{equation}
Thus $\mu_t(U)\geq\delta J(t)$, with $\delta$ independent of $t$.

We now use the strict negativity on $U$. Since $\kappa\geq\kappa_0$
there and $d\mu_t=u_t\,dV_t$, the weighted estimate gives
\[
 \int_Uu_t\,d\mu_t
 =\int_Uu_t^2\,dV_t
 \leq\frac1{\kappa_0}\int_U\kappa u_t^2\,dV_t
 \leq\frac{J(t)}{c_n\kappa_0}.
\]
Finally, apply Cauchy--Schwarz with respect to the measure $d\mu_t$:
\[
 \mu_t(U)^2
 \leq
 \left(\int_Uu_t\,d\mu_t\right)
 \left(\int_U\frac1{u_t}\,d\mu_t\right)
 \leq\frac{J(t)}{c_n\kappa_0}\,V(t),
 \qquad V(t):=\int_MdV_t,
\]
where we used
\[
 \int_U\frac1{u_t}\,d\mu_t
 =\int_UdV_t
 \leq V(t).
\]
Combining this with~\eqref{eq:mu-proportion} gives
\[
 \delta^2J(t)^2
 \leq\mu_t(U)^2
 \leq\frac{J(t)}{c_n\kappa_0}V(t).
\]
Since $J(t)>0$, division by $J(t)$ yields
\begin{equation}\label{eq:mixed-volume-control}
 J(t)=\int_M\operatorname{tr}_{\omega_t}\omega_h\,dV_t
 \leq C_2V(t),
 \qquad 0<t\leq1,
\end{equation}
where one may take
$C_2=(c_n\kappa_0\delta^2)^{-1}$, which is independent of $t$.

\subsection{Positive volume and ampleness}

We now use~\eqref{eq:mixed-volume-control} to obtain a positive
lower bound for $V(t)$ as $t\downarrow0$. Since $\omega_h$ and
$\omega_0$ are fixed positive definite forms on the compact
manifold $M$, there is a constant $a>0$ such that
$\omega_h\geq a\omega_0$. Taking the trace with respect to
$\omega_t$ gives
\[
 \operatorname{tr}_{\omega_t}\omega_0
 \leq a^{-1}\operatorname{tr}_{\omega_t}\omega_h
 =a^{-1}u_t.
\]

Because $[\omega_t]=\alpha+t\beta$, the total volume depends only
on this cohomology class:
\[
 V(t)=\frac1{n!}\int_M\omega_t^n
     =\frac1{n!}\int_M(\alpha+t\beta)^n.
\]
Differentiating the polynomial on the right, and then using the
trace identity for wedge products, we obtain
\[
 \begin{aligned}
 V'(t)
 &=\frac1{(n-1)!}\int_M(\alpha+t\beta)^{n-1}\wedge\beta\\
 &=\frac1{(n-1)!}\int_M\omega_t^{n-1}\wedge\omega_0\\
 &=\int_M\operatorname{tr}_{\omega_t}\omega_0\,dV_t.
 \end{aligned}
\]
The trace comparison above and~\eqref{eq:mixed-volume-control}
therefore give
\begin{equation}\label{eq:quasi-volume-derivative}
 V'(t)\leq a^{-1}J(t)
 \leq\frac{C_2}{a}V(t).
\end{equation}
Since $V(t)>0$ for $t>0$, we may divide by $V(t)$ and integrate:
\[
 \log V(1)-\log V(t)
 =\int_t^1\frac{V'(s)}{V(s)}\,ds
 \leq\frac{C_2}{a}(1-t).
\]
Thus
\[
 V(t)\geq V(1)\exp\left(-\frac{C_2}{a}(1-t)\right).
\]
The polynomial defining $V(t)$ is continuous at $t=0$, so letting
$t\downarrow0$ yields
\begin{equation}\label{eq:quasi-positive-volume}
 V(0)\geq V(1)\exp\left(-\frac{C_2}{a}\right)>0.
\end{equation}
Here $V(0)$ is defined by the cohomology polynomial; no convergence
of the metrics $\omega_t$ is needed.

Recall that $\alpha=2\pi c_1(K_M)$. Hence
\[
 \int_Mc_1(K_M)^n=\frac{n!}{(2\pi)^n}V(0)>0.
\]
The class $c_1(K_M)$ is nef by Lemma~\ref{lem:nef}.
Theorem~0.5 of Demailly--P\u{a}un~\cite{DP04} states that a nef
class with positive top self-intersection on a compact K\"ahler
manifold contains a K\"ahler current. It follows that $K_M$ is big.

\subsection{From positive volume to ampleness}

We now combine bigness with the absence of rational curves to complete
the proof of Theorem~\ref{thm:main}. We then record the
K\"ahler--Einstein consequence and the extension to Fujiki's class.

\begin{proof}[Proof of Theorem~\ref{thm:main}]
In dimension one, the degree computation at the beginning of
Section~\ref{sec:canonical} gives $\deg K_M>0$, so $K_M$ is ample.
Suppose now that $n\geq2$. The preceding argument shows that $K_M$
is big, hence $M$ is Moishezon. Since $M$ is also K\"ahler, it is
projective; see, for example,~\cite{Lazarsfeld04}.
Lemma~\ref{lem:nef} shows that $M$ contains no rational curves.
We may therefore apply~\cite[Lemma~2.1]{DT19}, which states that
the canonical bundle of a smooth projective variety is ample if it
is big and the variety contains no rational curves. Thus $K_M$ is ample.
\end{proof}

\begin{corollary}[{\cite{Aubin78,YauMA78}}]\label{cor:KE}
Under the assumptions of Theorem~\ref{thm:main}, $M$ admits a unique
K\"ahler--Einstein metric with K\"ahler form $\omega_{\rm KE}$ satisfying
\[
 \operatorname{Ric}(\omega_{\rm KE})=-\omega_{\rm KE},
 \qquad
 [\omega_{\rm KE}]=2\pi c_1(K_M).
\]
\end{corollary}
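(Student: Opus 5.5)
The corollary follows by combining Theorem~\ref{thm:main} with the Aubin--Yau theorem; no new curvature input is needed.

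By Theorem~\ref{thm:main}, $K_M$ is ample. Hence $2\pi c_1(K_M)=-2\pi c_1(M)$ contains a K\"ahler form $\omega_0$. Since $\Ric(\omega_0)$ represents $2\pi c_1(M)=-[\omega_0]$, the $\partial\dbar$-lemma gives a smooth real function $F$ with
\[
 \Ric(\omega_0)=-\omega_0+dd^cF .
\]
We then look for $\omega_{\rm KE}=\omega_0+dd^c\varphi>0$ solving the complex Monge--Amp\`ere equation
\[
 (\omega_0+dd^c\varphi)^n=e^{\varphi+F}\omega_0^n .
\]
Taking $-dd^c\log$ of both sides shows that a solution satisfies
\[
 \Ric(\omega_{\rm KE})=\Ric(\omega_0)-dd^c\varphi-dd^cF=-\omega_0-dd^c\varphi=-\omega_{\rm KE},
\]
and clearly $[\omega_{\rm KE}]=[\omega_0]=2\pi c_1(K_M)$.

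Existence of a smooth solution is exactly the theorem of Aubin and Yau \cite{Aubin78,YauMA78}, so we invoke it rather than rederive it. For orientation, its structure is as follows. The $C^0$ bound comes directly from the maximum principle: at a maximum of $\varphi$ one has $e^{\varphi+F}\le1$, and at a minimum $e^{\varphi+F}\ge1$. The $C^2$ bound follows from the Aubin--Yau/Chern--Lu computation for $\tr_{\omega_0}\omega_{\rm KE}$, and higher regularity from Evans--Krylov and Schauder theory, with the continuity method closing up. This analytic step is the only substantial one, and it is classical.

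Uniqueness also follows from the maximum principle. If $\omega_1$ and $\omega_2$ are two K\"ahler--Einstein forms in the class, write $\omega_2=\omega_1+dd^c\psi$. The equations $\Ric(\omega_i)=-\omega_i$ give
\[
 dd^c\Bigl(\log\frac{\omega_2^n}{\omega_1^n}-\psi\Bigr)=0,
\]
so $\omega_2^n=e^{\psi+c}\omega_1^n$ for a constant $c$; absorbing $c$ into $\psi$ we may take $c=0$. At a maximum point of $\psi$ we have $dd^c\psi\le0$, so $\omega_2^n\le\omega_1^n$ there, which forces $\psi\le0$. The same argument at a minimum point gives $\psi\ge0$. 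Therefore $\psi\equiv0$ and $\omega_1=\omega_2$.
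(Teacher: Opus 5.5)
Your proposal is correct and follows the paper's route exactly: Theorem~\ref{thm:main} gives ampleness of $K_M$, and the Aubin--Yau theorem then supplies the K\"ahler--Einstein metric in $2\pi c_1(K_M)$. Your reduction to the Monge--Amp\`ere equation $(\omega_0+dd^c\varphi)^n=e^{\varphi+F}\omega_0^n$ and your maximum-principle uniqueness argument are accurate elaborations of what the paper leaves to the citation.
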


\begin{proof}
Since $K_M$ is ample by Theorem~\ref{thm:main}, this follows from the
Aubin--Yau theorem~\cite{Aubin78,YauMA78}.
\end{proof}

\begin{remark}\label{rem:scope}
The proof uses the K\"ahler structure of $M$ to construct the
auxiliary metrics, to integrate the exact term in
Lemma~\ref{lem:stokes}, and to pass from positive top self-intersection
to projectivity. The curvature metric $h$ itself need not be K\"ahler.
The argument gives no conclusion about the sign of the holomorphic
sectional curvature of the resulting K\"ahler--Einstein metric.
\end{remark}

\begin{remark}\label{rem:fujiki}
Theorem~\ref{thm:main} remains valid if $M$ is assumed to belong to
Fujiki's class $\mathcal C$, the class of compact complex manifolds
bimeromorphic to compact K\"ahler manifolds. Indeed, Qin and Tian
prove that a manifold in $\mathcal C$ carrying a Hermitian metric
with nonpositive Chern holomorphic sectional curvature is K\"ahler
\cite[Lemma~3.1]{QT26}. Since quasi-negativity implies $H_h\leq0$,
their result applies to $M$. Theorem~\ref{thm:main} then gives the
ampleness of $K_M$.
\end{remark}

\noindent\textbf{Acknowledgments.}
The author is sponsored by the National Key R\&D Program of China
(Grant No. 2024YFA1013200) and the National Natural Science Foundation
of China (Grant No. 12671100). The author used ChatGPT for proof exploration, assistance with drafting, and language polishing.  The author guided the research direction, checked the mathematical statements and proofs, and carried out the final revisions. The author takes full responsibility for the content of the paper.

\end{document}